\newcommand\R{\mathbb{R}}
\newcommand\N{\mathbb{N}}
\newcommand\osc{\mathrm{osc}}
\newtheorem{thm}{Theorem}
\newtheorem{lem}[thm]{Lemma}
\newtheorem{fct}[thm]{Fact}
\newtheorem{obs}[thm]{Observation}
\newtheorem{prp}[thm]{Proposition}
\newtheorem{cor}[thm]{Corollary}
\theoremstyle{definition}
\newtheorem{dfn}[thm]{Definition}
\newenvironment{cs}[1]{\begin{trivlist}\item[\hskip\labelsep{\bf\boldmath Case #1.}]}{\end{trivlist}}
\newcommand{\MC}[1]{\ifx\relax#1\relax\textup{(MC)}\else\textup{(MC$_{#1}$)}\fi}
\newcommand{\VB}{\ensuremath{\mathrm{VB}}}
\newcommand{\AC}{\ensuremath{\mathrm{AC}}}
\newcommand{\ACG}{\ensuremath{\mathrm{ACG}}}
\newcommand{\VBs}{\ensuremath{\mathrm{VB_\star}}}
\newcommand{\ACs}{\ensuremath{\mathrm{AC_\star}}}
\newcommand{\ACGs}{\ensuremath{\mathrm{ACG_\star}}}
\newcommand{\J}{\mathscr{R}}
\newcommand{\I}{\mathscr{C}}
\title{Monotonically controlled integrals\thanks{The research leading to these results has received funding from the European Research Council under the European Union's Seventh Framework Programme (FP/2007-2013) / ERC Grant Agreement n.2011-ADG-20110209}}
\author{Thomas Ball\thanks{Granary Wharf House, 2 Canal Wharf, Leeds, LS11 5PS; email: thomas.ball@lhasalimited.org}\, and David Preiss\thanks{Mathematics Institute, University of Warwick, Coventry CV4 7AL, United Kingdom; email d.preiss@warwick.ac.uk}}
\date{}
\begin{document} 

\maketitle

\begin{abstract}
The monotonically controlled integral defined
by  Bendov\'a and Mal\'y, which is equivalent to the Denjoy-Perron integral, 
admits a natural parameter $\alpha>0$ thereby leading to
the whole scale of integrals called $\alpha$-monotonically controlled integrals. 
While the power of these integrals is easily seen to increase with increasing $\alpha$,
our main results show that
their exact dependence on $\alpha$ is rather curious.
For $\alpha<1$ they do not even contain the Lebesgue integral, 
for $1\le\alpha\le 2$ they coincide with the Denjoy-Perron integral, and for $\alpha>2$ 
they are mutually different and not even contained in the Denjoy-Khintchine integral.
\end{abstract}

\section{Introduction}
The monotonically controlled integral, or \MC{} integral, defined
by Hana Bendov\'a and Jan Mal\'y in \cite{BM},
is an interesting variant of nowadays rather abundant equivalent
definitions of the Denjoy-Perron integral. It is defined as follows.

\begin{dfn} Let $I\subset \R$ be an open interval and $f,F:I\to\R$ be functions.
We say that $f$ is an \textit{\MC{} derivative}
(monotonically controlled derivative)
of $F$  on $I$, or  that $F$ is an \textit{indefinite \MC{} integral}
of $f$ on $I$, if there exists a strictly increasing
function  $\varphi: I\to \R$ (which is called \textit{control function} for
the pair $(F,f)$)
such that for each $x\in I$,
\begin{equation}\label{EBM}
\lim_{y \to x}{\frac{F(y)-F(x)-f(x)(y-x)}{\varphi(y)-\varphi(x)}}=0.
\end{equation}
\end{dfn}

As Jan Mal\'y pointed out,
this definition invites introduction of a natural parameter thereby leading to
the whole scale of $\alpha$-monotonically controlled integrals, or \MC{\alpha} integrals,
where $\alpha>0$ is a parameter.

\begin{dfn} 
Let $I\subset \R$ be an open interval, $f,F:I\to\R$ be functions and $\alpha>0$. 
We say that $f$ is an \textit{\MC{\alpha} derivative} of $F$ on $I$,
or  that $F$ is an \textit{indefinite \MC{\alpha} integral}
of $f$ on $I$, if there exists a strictly increasing
function $\varphi: I \to\R$ (which we call the $\alpha$-control function for the pair $(F, f )$) such that for
each $x\in I$,
\[
\lim_{y \to x}{\frac{F(y)-F(x)-f(x)(y-x)}{\varphi(x+\alpha(y-x))-\varphi(x)}}=0.
\]
\end{dfn}

The original \MC{} integral is obtained for $\alpha=1$
and, since \MC{\alpha} integrability implies \MC{\beta} integrability for $\beta>\alpha$,
one of a number of natural questions, asked by Mal\'y, is whether for 
$\alpha>1$ the \MC{\alpha} integral
is still equivalent to the Denjoy-Perron integral.  
Here we show that this is not the case, but that the situation is rather interesting:
while for $\alpha\in[1,2]$ the \MC{\alpha} integral is equivalent to the  Denjoy-Perron integral,
for $\alpha>2$ the \MC{\alpha} integrals are mutually different and
even are not contained in the Denjoy-Khintchine integral. 
Although the case $\alpha<1$ is perhaps less interesting, we at least show that
these integrals do not contain the Lebesgue integral.
More precisely, our main results imply the following
\begin{thm}\label{MC} 
On any open interval $I\subset\R$,
\begin{enumerate}
\item\label{MC.1} 
There is a Lebesgue integrable function that is not
\MC{\alpha} integrable for any $0<\alpha<1$.
\item\label{MC.2} 
For $1\le\alpha\le2$, the \MC{\alpha} integral coincides with the Denjoy-Perron integral.
\item\label{MC.3} 
For any $\alpha\ge 2$ there is a function which is not \MC{\alpha} integrable but is \MC{\beta} integrable 
for every $\beta>\alpha$.
\item\label{MC.4} 
There is a function that is \MC{\alpha} integrable for every $\alpha>2$, but is not 
Denjoy-Khintchine integrable.
\end{enumerate}
\end{thm}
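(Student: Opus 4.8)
The plan is to reduce all four claims to assertions about one continuous function $F$ (the primitive) together with a strictly increasing control function $\varphi$, then to treat the range $1\le\alpha\le2$ by showing such an $F$ must be $\ACGs$ and the ranges $\alpha<1$, $\alpha\ge2$ by explicit constructions on Cantor-type sets. First some reductions. If $\varphi$ is an $\alpha$-control function for $(F,f)$ and $\beta\ge\alpha$, then $\varphi$ is a $\beta$-control function: for $y\ne x$ the point $x+\beta(y-x)$ lies on the same side of $x$ as $x+\alpha(y-x)$ and no nearer, so $|\varphi(x+\beta(y-x))-\varphi(x)|\ge|\varphi(x+\alpha(y-x))-\varphi(x)|>0$ and the quotient only shrinks; hence \MC{\alpha} integrability implies \MC{\beta} integrability. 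Moreover the defining condition forces $F$ to be continuous: writing $\varepsilon(y)$ for the quotient, $F(y)=F(x)+f(x)(y-x)+\varepsilon(y)\bigl(\varphi(x+\alpha(y-x))-\varphi(x)\bigr)$ with $\varepsilon(y)\to0$, and as $y\to x$ the last term tends to $0$ since $\varepsilon(y)\to0$ while $\varphi(x+\alpha(y-x))-\varphi(x)$ stays bounded (tending to $\varphi(x^{\pm})-\varphi(x)$). One also checks, as for \MC{1} in \cite{BM}, that two indefinite \MC{\alpha} integrals of $f$ differ by a constant. So ``$f$ is \MC{\alpha} integrable'' is equivalent to ``the continuous primitive $F$ of $f$ --- absolutely continuous on compact subintervals when $f$ is Lebesgue integrable --- carries a strictly increasing $\alpha$-control function,'' and \ref{MC.1}--\ref{MC.4} become existence statements about $\varphi$.

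\emph{Part \ref{MC.2}.} One inclusion is immediate: the Denjoy--Perron integral is the \MC{1} integral \cite{BM}, and \MC{1} integrability implies \MC{\alpha} integrability for $\alpha\ge1$. For the converse when $1\le\alpha\le2$ I would show that an indefinite \MC{\alpha} integral $F$ is $\ACGs$, hence a Denjoy--Perron primitive with $f=F'$ a.e.\ (recovered wherever $\varphi$ is finitely differentiable). Decompose $I$ into the sets $E_n$ on which $|f|\le n$ and $|F(x+h)-F(x)-f(x)h|\le\tfrac1n|\varphi(x+\alpha h)-\varphi(x)|$ for all small $h$; on $E_n$ this gives $\osc(F,[x,x+h])\le 2n|h|+\tfrac2n\bigl(\varphi(x+\alpha h)-\varphi(x)\bigr)$. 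For a disjoint family $\{[a_i,b_i]\}$ with endpoints in $E_n$, summing this requires controlling $\sum_i\bigl(\varphi(a_i+\alpha(b_i-a_i))-\varphi(a_i)\bigr)$; the stretched interval $[a_i,a_i+\alpha(b_i-a_i)]$ overshoots $[a_i,b_i]$ by $(\alpha-1)(b_i-a_i)$, which is at most $b_i-a_i$ precisely when $\alpha\le2$, and this bounded overshoot lets one reorganize the stretched intervals into boundedly many essentially disjoint subfamilies and bound the sum by a fixed multiple of the (finite, since $\varphi$ is monotone) variation of $\varphi$. Summing the oscillation estimate then yields the $\ACGs$ property. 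Making this covering lemma precise, and seeing that $\alpha=2$ is exactly its limit, is the main technical point here.

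\emph{Parts \ref{MC.1}, \ref{MC.3}, \ref{MC.4}.} These use variants of one scheme: a Cantor set $C\subset I$ with branch ratios calibrated to the prescribed $\alpha$, and small bumps of $f$ in the gaps accumulating at each $x\in C$, arranged so that the primitive $F$ oscillates at every $x\in C$ --- $\limsup_{h\to0}|F(x+h)-F(x)-f(x)h|/\theta(|h|)>0$ for a prescribed gauge $\theta$ --- and stays (in \ref{MC.1}) locally absolutely continuous. Any strictly increasing $\alpha$-control function $\varphi$ is then forced, at the co-countably many $x\in C$ where $\varphi$ is continuous, to satisfy $\varphi(x+\alpha h)-\varphi(x)\gg\theta(|h|)$ along some $h\to0$; but for $\alpha$ at the critical value the dilated point $x+\alpha h$ fails to reach across the gap carrying the bump into the neighbouring Cantor branch, so $\varphi$ must furnish this growth ``privately'' to each branch, and a covering argument then makes $\varphi$ of infinite variation on a bounded interval --- a contradiction. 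For \ref{MC.1} one uses a construction robust for every $\alpha<1$ (branch ratios approaching the symmetric value along the scales), exploiting that $[x,x+\alpha h]$ always falls short of $x+h$, so that even Lebesgue primitives are excluded. For \ref{MC.3}, given $\alpha\ge2$ one calibrates $C$ so the obstruction occurs exactly at $\alpha$, while for $\beta>\alpha$ the dilated point $x+\beta h$ does reach into a neighbouring branch, whose $\varphi$-mass can then be shared across the branches it serves, so an explicit devil's-staircase-type $\varphi$ (constant off $C$, self-similarly distributed over $C$) is a $\beta$-control function. For \ref{MC.4} one runs the $\alpha=2$ version with parameters chosen so that a single $F$ admits such a $\beta$-control function for every $\beta>2$ and in addition has unbounded variation in the wide sense on every portion of $C$, whence $F$ is not $\ACG$ and $f$ is not Denjoy--Khintchine integrable.

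\emph{Main obstacle.} The crux lies in \ref{MC.2}--\ref{MC.4}: the $\alpha=2$ covering lemma in one direction, and in the other the two-sided verification of the constructions --- simultaneously exhibiting, for each $\beta>\alpha$, an explicit strictly increasing $\beta$-control function (the easy half) and proving, by a variation/covering argument genuinely sensitive to the dilation factor and not merely to scale, that no strictly increasing $\alpha$-control function exists (the hard half) --- while calibrating the Cantor geometry so that both halves hold at the same critical $\alpha$ and, for \ref{MC.4}, so that failure of Denjoy--Khintchine integrability is compatible with \MC{\beta} integrability for all $\beta>2$.
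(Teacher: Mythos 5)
The decisive step in your treatment of \ref{MC.2} --- the covering lemma asserting that for non-overlapping $[a_i,b_i]$ and $\alpha\le2$ the stretched intervals $[a_i,a_i+\alpha(b_i-a_i)]$ can be reorganized into boundedly many disjoint subfamilies, so that $\sum_i\bigl(\varphi(a_i+\alpha(b_i-a_i))-\varphi(a_i)\bigr)$ is at most a fixed multiple of the variation of $\varphi$ --- is false precisely at the endpoint $\alpha=2$ that the theorem must include. Take $t$ and $[a_i,b_i]=[t-2^{-i},t-2^{-i-1}]$: these are non-overlapping, but their $2$-stretched intervals are $[t-2^{-i},t]$, which are nested (so no splitting into boundedly many disjoint subfamilies exists), and for a strictly increasing $\varphi$ with a left jump at $t$, or with $\varphi(t)-\varphi(t-h)\sim 1/\log(1/h)$, the sum $\sum_i\bigl(\varphi(t)-\varphi(t-2^{-i})\bigr)$ diverges although $\varphi$ has finite variation. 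Moreover, even where a bounded-multiplicity bound is available (for $\alpha<2$ it is, with constants blowing up as $\alpha\to2$), your estimate bounding $\osc(F,[x,x+h])$ by $2n|h|$ plus $\tfrac2n\bigl(\varphi(x+\alpha h)-\varphi(x)\bigr)$ yields only \VBs-type control on $E_n$: since $\varphi$ may be purely singular, the $\varphi$-term does not become small when $\sum_i(b_i-a_i)$ is small, so \ACs\ on $E_n$, hence \ACGs, does not follow (continuity, generalized bounded variation and $F'=f$ a.e.\ are not enough --- the Cantor function is the standard obstruction). The paper's proof of Theorem~\ref{M.2} avoids both problems by a different device: for $x,y$ in the bad set it applies the control at each endpoint towards the midpoint, so that for $\alpha=2$ the stretched increment is measured by $\varphi(y)-\varphi(x)$ over the original, disjoint, interval, giving $|F(y)-F(x)|\le2|\varphi(y)-\varphi(x)|+2k|y-x|$; and instead of aiming at \ACGs\ along a preassigned decomposition it runs the classical Denjoy-type scheme: Baire category on the closed set $Q$ where $F$ fails to be locally \ACGs, bounded variation of $F$ on a portion of $Q$, Lebesgue integrability of $f$ there, convergence of the oscillations of $F$ over the contiguous intervals (again via the $\alpha=2$ trick), and Saks's extension lemma to contradict $Q\ne\emptyset$.

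For \ref{MC.1}, \ref{MC.3} and \ref{MC.4} you record the right heuristics (bumps in the gaps of a Cantor-type set, a devil's-staircase control for $\beta>\alpha$, a Baire-category argument against any $\alpha$-control), but the constructions are not carried out, and what you defer is exactly where the work lies. In the paper's Theorem~\ref{M.3} the whole difficulty is concentrated in the choice of the bump heights $Q_J$ (Lemma~\ref{C}): they must satisfy $\sum_{J\subset I}Q_J=\infty$ for every open $I$ meeting the Cantor set (this is what defeats every candidate $\alpha$-control, via category plus disjointness of the intervals $[x_i,x_i+\alpha(y_i-x_i)]$), and simultaneously $Q_J\le\eta\min\bigl(\psi(b+\eta(b-a))-\psi(b),\,\psi(a)-\psi(a-\eta(b-a))\bigr)$ for all but finitely many $J=(a,b)$ (this is what makes $x+\psi(x)$ a $\beta$-control for every $\beta>\alpha$); nothing in your sketch produces such weights or verifies either half. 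Likewise \ref{MC.1} is proved in the paper not with a Cantor set but with a dense family of configurations in which many left end-points share one right end-point and $\tau_k\to1$, and \ref{MC.4} uses a base-$5$ Cantor set with plateau-shaped bumps so that one control function works simultaneously for all $\beta>2$ while the increments of the primitive across the gaps already diverge on every portion of $C$. As it stands, neither the case $\alpha=2$ of \ref{MC.2} nor any of the three counterexamples is actually established.
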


In connection with \ref{MC.1} and \ref{MC.4}, standard examples show that
there is a function that is \MC{\alpha} integrable for any $\alpha<1$ but not 
Lebesgue integrable, and a function that is \MC{\alpha} integrable for any $\alpha> 2$ but not 
Denjoy-Khintchine integrable.

Theorem \ref{MC}, sometimes in a stronger or more precise form, will be proved
in the last part of this paper:
 \ref{MC.1} in Theorem~\ref{M.1},
 \ref{MC.2} in Theorem~\ref{M.2},
 \ref{MC.3} in Theorem~\ref{M.3}, and
 \ref{MC.4} in Theorem~\ref{M.4}.

Before coming to the above results, we show in Section~\ref{bs} some basic properties of these integrals,
especially that the notion is reasonable, namely that two indefinite integrals of
the same function differ by a constant.
However we do not develop more advanced theory of these integrals.

The main results are stated and proved in Section~\ref{rel}. We first
remind ourself with the notions used to give equivalent
definitions of the Lebesgue, Denjoy-Perron and Denjoy-Khintchine 
integrals that we use to compare them with the \MC{\alpha} integral.
In addition to
results stated in Theorem~\ref{MC} we 
also show that bounded measurable functions
are \MC{\alpha} integrable for every $\alpha>0$ 
and that every function that is \MC{\alpha} integrable for some $\alpha>0$
is Lebesgue integrable on some subinterval. 
In connection with the statement~\ref{MC.2} of Theorem~\ref{MC}
we reprove the result of Bendov\'a and Mal\'y~\cite{BM} that the \MC{} integral
is equivalent to the Denjoy-Perron integral by showing directly 
its equivalence with Perron's original definition \cite{OP}. The argument in \cite{BM}
is based on the definition of the Kurzweil-Henstock integral \cite{JK,RH},
which is known to be equivalent to the Denjoy-Perron integral. However,
we should point out that our approach is close to Kurzweil's proof of equivalence
of his integral (which later became the  Kurzweil-Henstock integral) to the Perron integral.
The closeness of the definition of the \MC{} integral to Perron's definition is perhaps
surprising, since \cite{BM} quotes as the intermediate step to their definition the 
variational integral of \cite{RG} which arose from Henstock's approach~\cite{RH}.

In this area, 
it is no wonder that most of our references are to the still best text on much of 
classical real analysis, 
\emph{Theory of the Integral} by Stanis{\l}aw Saks. As this book numbers paragraphs 
and results in each chapter separately, we will refer, for example, to VII(\S5) for 
paragraph 5 in chapter 7, or to Theorem VI(7.2) for Theorem (7.2) in chapter 6.

As in \cite{BM}, although this makes little difference,
we are interested in indefinite integrals rather than in definite ones.
In particular, by saying that a function is \emph{integrable} we mean
that it has an indefinite integral.  Finally, we mention that 
we will use the terms \emph{positive} for ${}\ge0$, \emph{strictly positive}
for $>0$, and similarly
\emph{increasing} and \emph{strictly increasing}.

\section{Basic properties of \MC{\alpha} integrals}\label{bs}

We begin by remarking that it is immediate to see that if $\varphi$ is an $\alpha$-control function for
$(F,f)$, $c>0$ and $\psi$ is increasing, then $c\varphi+\psi$ is also an $\alpha$-control function for
$(F,f)$. It follows that, if $\varphi$ is an $\alpha$-control function for
$(F,f)$, $\psi$ is an $\alpha$-control function for
$(G,g)$ and $a,b\in\R$, then $\varphi+\psi$ is a control function for
$(aF+bG,af+bg)$. In other words, the \MC{\alpha} integral is linear:
If $F$ and $G$ are indefinite \MC{\alpha} integrals of $f$ and~$g$, respectively,
and $a,b\in\R$, then $aF+bG$ is an indefinite \MC{\alpha} integral of $af+bg$.

Some simple basic properties of
indefinite \MC{\alpha} integrals are collected in the following statement.

\begin{prp}\label{T}
Suppose $\alpha>0$ and $F$ is an indefinite \MC{\alpha} integral of $f$ on $(a,b)$. Then
\begin{enumerate}
\item\label{T.1}
$F$ is continuous on $(a,b)$;
\item\label{T.2}
$F$ is an indefinite \MC{\beta} integral of $f$ on $(a,b)$ for every $\alpha>\beta$;
\item\label{T.3}
$F'(x)=f(x)$ for almost every $x\in(a,b)$;
\end{enumerate}
\end{prp}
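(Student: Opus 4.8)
I would prove the three items essentially independently, exploiting the defining limit relation
\[
\lim_{y\to x}\frac{F(y)-F(x)-f(x)(y-x)}{\varphi(x+\alpha(y-x))-\varphi(x)}=0
\]
together with the fact that $\varphi$, being strictly increasing, is bounded and has at most countably many discontinuities, and its derivative $\varphi'$ exists and is finite almost everywhere with $\int_a^b\varphi'\le\varphi(b^-)-\varphi(a^+)<\infty$ (Lebesgue's theorem on monotone functions, cf.\ Saks). For \ref{T.1}: fix $x$ and note that since $\varphi$ is increasing and bounded, $\varphi(x+\alpha(y-x))-\varphi(x)$ stays bounded as $y\to x$; hence the numerator $F(y)-F(x)-f(x)(y-x)\to0$, and since $f(x)(y-x)\to0$ we get $F(y)\to F(x)$. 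This gives continuity at every $x\in(a,b)$ (one should be mildly careful that $x+\alpha(y-x)$ stays inside a fixed compact subinterval of $(a,b)$ for $y$ near $x$, which is immediate for $y$ close enough to $x$).

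For \ref{T.2}, the key observation is monotonicity of $\varphi$ again: if $0<\beta<\alpha$ then for $y>x$ we have $x<x+\beta(y-x)<x+\alpha(y-x)$, so $0<\varphi(x+\beta(y-x))-\varphi(x)\le\varphi(x+\alpha(y-x))-\varphi(x)$, and symmetrically for $y<x$ one has $x+\alpha(y-x)<x+\beta(y-x)<x$, giving the same inequality between the absolute values of the increments. Therefore
\[
\left|\frac{F(y)-F(x)-f(x)(y-x)}{\varphi(x+\beta(y-x))-\varphi(x)}\right|
\ge\left|\frac{F(y)-F(x)-f(x)(y-x)}{\varphi(x+\alpha(y-x))-\varphi(x)}\right|,
\]
so the limit being $0$ for $\alpha$ forces it to be $0$ for $\beta$ — wait, this is backwards, so I would instead argue: the $\alpha$-quotient tends to $0$, and the $\beta$-quotient is the $\alpha$-quotient multiplied by $\bigl(\varphi(x+\alpha(y-x))-\varphi(x)\bigr)/\bigl(\varphi(x+\beta(y-x))-\varphi(x)\bigr)$, a ratio that is \emph{not} bounded in general. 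So the correct route is to replace $\varphi$ by a dilated control function: set $\psi(t):=\varphi(x_0+\tfrac{\alpha}{\beta}(t-x_0))$ for a suitable reference point — but since we need one $\beta$-control function working simultaneously at all $x$, the clean choice is to observe that $t\mapsto\varphi\bigl(a+\tfrac{\alpha}{\beta}(t-a)\bigr)$ is strictly increasing on a slightly smaller interval and, by the substitution $x+\beta(y-x)\mapsto$ the point whose image under this map is $x+\alpha(y-x)$, serves as a $\beta$-control function after shrinking the domain; I would carry this out carefully, noting that \ref{T} is stated on $(a,b)$ and the dilation can be centered so as to cover any compact subinterval, which suffices since the conclusion is local.

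For \ref{T.3}, the plan is to reduce to the classical fact that a function which at a.e.\ point has a suitable "controlled" difference quotient is differentiable a.e.\ with the expected derivative. Concretely, I would show first that $\varphi$ is differentiable at a.e.\ $x$, and at such an $x$ with $\varphi'(x)<\infty$ the denominator satisfies $\varphi(x+\alpha(y-x))-\varphi(x)=\alpha\varphi'(x)(y-x)+o(y-x)$ from the right (and the analogous one-sided estimate from the left, using that $\varphi$ has one-sided derivatives a.e.); feeding this into the defining limit gives $F(y)-F(x)-f(x)(y-x)=o\bigl(\varphi(x+\alpha(y-x))-\varphi(x)\bigr)=o(y-x)$ at every such $x$ where moreover $\varphi'(x)\neq0$. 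The set where $\varphi'(x)=0$ needs separate handling: there the denominator is $o(y-x)$, so the numerator is $o$ of something that is $o(y-x)$; this does not immediately give $F'(x)=f(x)$ pointwise, but the set $\{\varphi'=0\}$ together with the non-differentiability set of $\varphi$ is null, and on it I would use a Vitali-type covering / density argument: by the defining limit, for a.e.\ $x$ and every $\varepsilon>0$ there is $\delta>0$ with $|F(y)-F(x)-f(x)(y-x)|\le\varepsilon\bigl(\varphi(x+\alpha(y-x))-\varphi(x)\bigr)$ for $|y-x|<\delta$, and one then integrates / sums these increments over a fine partition to control the variation of $F(t)-F(x_0)-\int f$ type expression; alternatively, and more cleanly, I expect the intended proof invokes a known lemma (in the spirit of Saks, e.g.\ the Denjoy–Young–Saks theorem or the fact that a continuous function with a nonnegative "major-like" controlled derivative is monotone) to conclude $F'=f$ a.e. The main obstacle is precisely this last point — handling the points where the control function has zero or infinite derivative — and I would expect the argument there to be the technical heart of the proof, likely dispatched by a covering argument showing that $\{x: F'(x)\text{ fails to exist or }F'(x)\neq f(x)\}$ is contained, up to a null set, in the image under the correspondence $y\leftrightarrow x+\alpha(y-x)$ of a set on which $\varphi$ is "flat", which has measure zero.
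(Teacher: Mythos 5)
Your argument for \ref{T.1} is the same as the paper's (local boundedness of $\varphi$ forces the numerator in the defining quotient to tend to $0$, hence $F(y)\to F(x)$), so nothing to add there; the problems are in \ref{T.2} and \ref{T.3}. For \ref{T.2}, first note that the inequality ``$\alpha>\beta$'' in the printed statement is the reverse of what is intended, proved and later used: the introduction states that \MC{\alpha} integrability implies \MC{\beta} integrability for $\beta>\alpha$, and that is the direction invoked in the proof of Theorem~\ref{M.2}. The paper's proof of \ref{T.2} is exactly your first computation: with the \emph{same} control function, $|\varphi(x+\alpha(y-x))-\varphi(x)|\le|\varphi(x+\beta(y-x))-\varphi(x)|$ whenever $\beta\ge\alpha$ (for $y>x$ as you wrote it, and with absolute values for $y<x$), so the $\beta$-quotient is dominated by the $\alpha$-quotient and the limit transfers. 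You correctly observed that for the literal reading $\beta<\alpha$ this comparison goes the wrong way, but your rescue attempt fails: with $\psi(t):=\varphi\bigl(a+\tfrac{\alpha}{\beta}(t-a)\bigr)$ one gets $\psi(x+\beta(y-x))-\psi(x)=\varphi\bigl(x'+\alpha(y-x)\bigr)-\varphi(x')$ where $x'=a+\tfrac{\alpha}{\beta}(x-a)\ne x$, i.e.\ an increment of $\varphi$ of the right length but at the wrong base point, and for a general increasing $\varphi$ there is no comparison between increments at $x$ and at $x'$. No repair is possible, because the implication in that direction is false: Theorem~\ref{M.1} gives a Lebesgue (hence \MC{1}) integrable function that is not \MC{\beta} integrable on \emph{any} subinterval when $\beta<1$ (so localizing does not help), and Theorem~\ref{M.3} with $\alpha=2$ gives a function that is \MC{3} integrable but not \MC{2} integrable. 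So the correct move is to record the misprint, prove the statement for $\beta\ge\alpha$ by your first inequality, and drop the dilation detour.

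For \ref{T.3}, the ``technical heart'' you anticipate does not exist: the set $\{\varphi'=0\}$ needs no separate treatment, and the Vitali-covering / Denjoy--Young--Saks machinery you gesture at is neither needed nor actually carried out, so as written this part is only a sketch with its main step missing. At every $x$ where $\varphi$ has a finite derivative (almost every $x$, by Lebesgue's theorem, which you already invoke), the denominator is nonzero for $y\ne x$ because $\varphi$ is strictly increasing, so one may simply factor
\[
\frac{F(y)-F(x)-f(x)(y-x)}{y-x}
=\frac{F(y)-F(x)-f(x)(y-x)}{\varphi(x+\alpha(y-x))-\varphi(x)}\cdot\frac{\varphi(x+\alpha(y-x))-\varphi(x)}{y-x};
\]
the first factor tends to $0$ by the definition of the $\alpha$-control function and the second tends to $\alpha\varphi'(x)$, a finite number, possibly $0$. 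The product therefore tends to $0$ and $F'(x)=f(x)$; when $\varphi'(x)=0$ the numerator is $o$ of something that is itself $o(y-x)$, which is better, not worse, than what is needed --- your claim that this case ``does not immediately give $F'(x)=f(x)$'' is mistaken. This two-line argument, valid off the null set where $\varphi$ is not differentiable, is precisely the paper's proof. (Two small inaccuracies along the way: an increasing $\varphi$ on $(a,b)$ need not be bounded, nor need $\int_a^b\varphi'$ be finite; you only use, and only need, local boundedness for \ref{T.1} and almost everywhere differentiability for \ref{T.3}.)
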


\begin{proof}
Let $\varphi$ be an $\alpha$-control function for the pair $(F,f)$.
Since $\varphi$ is bounded on a neighbourhood of $x$, taking limit as $y\to 0$ in
\[F(y) = F(x) + f(x)(y-x)+  \frac{F(y)-F(x)-f(x)(y-x)}{\varphi(x+\alpha(y-x))-\varphi(x)}
(\varphi(x+\alpha(y-x))-\varphi(x)) \]
gives
\[\lim_{y \to x} F(y) =F(x).\]

The second statement follows immediately from 
\[\varphi(x+\alpha(y-x))-\varphi(x)\le \varphi(x+\beta(y-x))-\varphi(x).\]

For the third statement we observe that for 
every $x$ at which $\varphi$ is differentiable,
\begin{multline*}
\lim_{y \to x} \frac{F(y)-F(x)-f(x)(y-x)}{y-x}\\
=\alpha \lim_{y \to x}\frac{F(y)-F(x)-f(x)(y-x)}{\varphi(x+\alpha(y-x))-\varphi(x)}
\lim_{y \to x} \frac{\varphi(x+\alpha(y-x))-\varphi(x)}{\alpha(y-x)}
=0.
\end{multline*}
Since $\varphi$ is differentiable almost everywhere, \ref{T.3} follows.
\end{proof}

\begin{prp}\label{sm}
Suppose $F:(a,b)\to\R$ satisfies
\begin{enumerate}
\item\label{sm.1}
$\limsup_{y\nearrow x}F(y)\le F(x)\le\limsup_{y\searrow x} F(y)$ for every $x\in (a,b)$;
\item\label{sm.2}
there are a 
strictly increasing $\varphi:(a,b)\to\R$ and $\alpha>0$ such that
for every $x\in(a,b)$ except at most countably many,
\begin{equation}\label{E.sm}
\liminf_{h\searrow 0} \frac{F(x+h)-F(x)}{\varphi(x+\alpha h) -\varphi(x)}\ge 0.
\end{equation}
\end{enumerate}
Then $F$ is increasing on $(a,b)$.
\end{prp}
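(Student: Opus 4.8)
The plan is to prove the equivalent statement that $F(c)\le F(d)$ whenever $c<d$ in $(a,b)$. Fix such a pair and $\epsilon>0$, enumerate the exceptional countable set as $\{x_n:n\in\N\}$, and put $\sigma(x)=\sum\{2^{-n}:x_n<x\}$ and $\Gamma(x)=\varphi(x)+x$; thus $\sigma$ is bounded, increasing and left-continuous with $\sigma(y)-\sigma(x_n)\ge 2^{-n}$ for every $y>x_n$, and $\Gamma$ is strictly increasing. I would then study
\[g(x)=F(x)-F(c)+\epsilon\big(\sigma(x)-\sigma(c)\big)+\epsilon\big(\Gamma(x)-\Gamma(c)\big),\qquad x\in[c,d],\]
and show $g\ge 0$ on $[c,d]$; since $g(c)=0$ and the quantities $\sigma(d),\sigma(c),\Gamma(d),\Gamma(c)$ are finite and independent of $\epsilon$, letting $\epsilon\searrow 0$ in $g(d)\ge 0$ then gives $F(d)\ge F(c)$.

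The heart of the matter is the following property of $\Gamma$: for every $x\in(a,b)$,
\[\limsup_{h\searrow 0}\frac{\Gamma(x+h)-\Gamma(x)}{\varphi(x+\alpha h)-\varphi(x)}>0.\]
Writing the numerator as $\big(\varphi(x+h)-\varphi(x)\big)+h$, one sees the quotient is at least the larger of $\tfrac{\varphi(x+h)-\varphi(x)}{\varphi(x+\alpha h)-\varphi(x)}$ and $\tfrac{h}{\varphi(x+\alpha h)-\varphi(x)}$, so it suffices to rule out that both of these have $\limsup=0$. If the second does, then $\tfrac{\varphi(x+\alpha h)-\varphi(x)}{h}\to+\infty$, hence $\liminf_{k\searrow 0}\tfrac{\varphi(x+k)-\varphi(x)}{k}=+\infty$. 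If the first does, then necessarily $\alpha>1$ and $\varphi$ is right-continuous at $x$ (otherwise the quotient tends to $1$), and iterating the inequality $\varphi(x+h)-\varphi(x)<\theta\big(\varphi(x+\alpha h)-\varphi(x)\big)$ (valid for small $h$, with $\theta\in(0,1)$ arbitrary) along the scales $h,\alpha h,\alpha^2h,\dots$ yields $\varphi(x+h)-\varphi(x)=O(h^{\gamma})$ as $h\searrow 0$ for every $\gamma>0$; in particular $\tfrac{\varphi(x+h)-\varphi(x)}{h}\to 0$, so $\limsup_{k\searrow 0}\tfrac{\varphi(x+k)-\varphi(x)}{k}=0$, incompatible with the previous sentence. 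Hence the two limsups cannot both vanish.

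It remains to run the usual "first bad point" argument. Suppose $g(y_*)<0$ for some $y_*\in[c,d]$; then necessarily $y_*>c$. Let $s=\sup\{y\in[c,y_*]:g(y)\ge 0\}$. Using assumption \ref{sm.1} together with the left-continuity of $\sigma$ and the existence of left limits of $\varphi$ one gets $\limsup_{y\nearrow s}g(y)\le g(s)$, whence $g(s)\ge 0$; since $g(y_*)<0$ this forces $s<y_*$, and by the definition of $s$ we have $g<0$ on $(s,y_*]$. From $g(s+h)<0\le g(s)$ we obtain, for $0<h\le y_*-s$,
\[F(s+h)-F(s)<-\epsilon\big(\sigma(s+h)-\sigma(s)\big)-\epsilon\big(\Gamma(s+h)-\Gamma(s)\big).\]
If $s=x_n$ for some $n$, then using $\sigma(s+h)-\sigma(s)\ge 2^{-n}$ and dropping the negative $\Gamma$-term gives $F(s+h)<F(s)-\epsilon 2^{-n}$ for all such $h$, so $\limsup_{y\searrow s}F(y)\le F(s)-\epsilon 2^{-n}<F(s)$, contradicting the right-hand inequality in \ref{sm.1}. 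If $s$ lies outside the exceptional set, then dropping the nonnegative $\sigma$-term, dividing by $\varphi(s+\alpha h)-\varphi(s)>0$ and passing to $\liminf_{h\searrow 0}$, condition \eqref{E.sm} at $s$ forces $\limsup_{h\searrow 0}\tfrac{\Gamma(s+h)-\Gamma(s)}{\varphi(s+\alpha h)-\varphi(s)}\le 0$, contradicting the property of $\Gamma$ established above. Thus no such $y_*$ exists and $g\ge 0$ on $[c,d]$, which completes the argument.

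I expect the main obstacle to be exactly that property of $\Gamma$: for $\alpha>1$ the shifted increment $\varphi(x+\alpha h)-\varphi(x)$ can be incomparably larger than $\varphi(x+h)-\varphi(x)$ (take a $\varphi$ that is flat to infinite order at $x$), so adding only a multiple of $\varphi$ to $F$ is not enough to pay for the loss created by the $\alpha$-shift in the level-set argument; one must also add the identity (or any increasing function with everywhere-positive lower right derivate), and the slightly delicate point is that the two ways in which this device could fail — the $\varphi$-ratio collapsing to $0$ and the $h$-ratio collapsing to $0$ — are mutually exclusive. Everything else is the classical monotonicity argument, with $\sigma$ absorbing the countable exceptional set through the semicontinuity hypothesis.
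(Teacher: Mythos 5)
Your proof is correct, but it takes a genuinely different route from the paper's. The paper perturbs $F$ to $F_\tau=F+\tau\varphi$ and verifies the hypothesis of Saks's monotonicity theorem VI(7.2), namely $\limsup_{h\searrow0}\bigl(F_\tau(x+h)-F_\tau(x)\bigr)/h\ge0$ off a countable set; the comparison with the identity and the treatment of the exceptional set are thus delegated to Saks, and the key dichotomy is handled, when $\varphi$ has infinite right derivative at $x$, by the sequence $u_k=\sup\{h\in[0,\delta]:\varphi(x+h)\ge\varphi(x)+kh\}$, along which $\varphi(x+\alpha u_k)-\varphi(x)$ stays comparable to $\varphi(x+u_k)-\varphi(x)$. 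You instead make the identity explicit by taking $\Gamma=\varphi+x$ as the comparison function, re-prove the monotonicity machinery from scratch by a first-bad-point argument in which the jump function $\sigma$ absorbs the countable exceptional set through hypothesis \ref{sm.1}, and establish the key fact $\limsup_{h\searrow0}\bigl(\Gamma(x+h)-\Gamma(x)\bigr)/\bigl(\varphi(x+\alpha h)-\varphi(x)\bigr)>0$ by a different dichotomy: collapse of the $h$-ratio forces an infinite right derivative of $\varphi$ at $x$, while collapse of the $\varphi$-ratio, iterated along the scales $h,\alpha h,\alpha^2h,\dots$, gives $\varphi(x+h)-\varphi(x)=O(h^\gamma)$ for every $\gamma>0$ and hence a zero right derivative, and the two are incompatible. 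Your version buys self-containedness (no appeal to Saks) and isolates the quantitative property of $\varphi$ cleanly; the paper's buys brevity and the explicit link to the classical monotonicity theorem, which the authors wished to exhibit. The details check out: the constant in $O(h^\gamma)$ may depend on $\gamma$, which is harmless, and at $s=\sup\{y\in[c,y_*]:g(y)\ge0\}$ either $s$ belongs to the set or is approached from the left by it, so $g(s)\ge0$ follows as you indicate.
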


\begin{proof}
Fix $\tau>0$ and notice that, since $\varphi$ is increasing,
$F_\tau:=F+\tau\varphi$ also satisfies \ref{sm.1}. We show that
for every $x\in (a,b)$ at which \eqref{E.sm} holds
\begin{equation}\label{E.m}
\limsup_{h\searrow 0} \frac{F_\tau(x+h)-F_\tau(x)}{h}\ge 0.
\end{equation}
Then \cite[Theorem VI(7.2)]{Saks} says that $F_\tau$ is increasing 
and the statement will follow by taking the limit as $\tau\to0$.

Fix $x\in (a,b)$ at which \eqref{E.sm} holds.
Suppose first that there is a sequence $h_k\searrow 0$ such that 
\[L:=\lim_{k\to\infty}\frac{\varphi(x+\alpha h_k)-\varphi(x)}{h_k}.\]
exists and is finite. Since the limit defining $L$ consists of positive terms,
\[\limsup_{h\searrow 0} \frac{F_\tau(x+h)-F_\tau(x)}{h} 
\ge L \liminf_{k\to\infty} \frac{F(x+h_k)-F(x)}{\varphi(x+\alpha h_k) -\varphi(x)} \ge 0.\]

Suppose next that there is no $L$ as above or, in other words,  that $\varphi$ has infinite right derivative at $x$. 
Let $u_k:=\sup\{h\in[0,\delta]: \varphi(x+h)\ge \varphi(x)+k h\}$.
Using that $\varphi$ has infinite right derivative at $x$ and is bounded by $\varphi(x+\delta)$,
we see that $0<u_k \le (\varphi(x+\delta)-\varphi(x))/k$. Since $\varphi$ is increasing,
$\varphi(x+u_k)\ge \varphi(x)+k u_k$. For sufficiently large $k$
we have $\alpha u_k\in [0,\delta]$ and $\alpha u_k>u_k$,
hence $\varphi(x+\alpha u_k)-\varphi(x) \le k \alpha u_k \le (\varphi(x+u_k) - \varphi(x))/\alpha$
for large enough~$k$. 
Passing to a subsequence of $u_k$, which we will denote $h_k$, we have $h_k\searrow 0$ and
\[L:=\lim_{k\to\infty} \frac{\varphi(x+\alpha h_k)-\varphi(x)}{\varphi(x+ h_k) -\varphi(x)}\]
exists and is finite. Hence
\[\limsup_{h\searrow 0} \frac{F_\tau(x+h)-F_\tau(x)}{{\varphi(x+h)-\varphi(x)}} 
\ge \tau +L\liminf_{k\to\infty} \frac{F(x+h_k)-F(x)}{\varphi(x+\alpha h_k) -\varphi(x)}\ge\tau>0.\]  
Consequently, there are arbitrarily small $h>0$ such that $F_\tau(x+h)-F_\tau(x)>0$,
and so \eqref{E.m} holds as well.
Hence it holds in both cases, and the proof is finished.
\end{proof}

In order to see the connection with the monotonicity result of 
\cite[Theorem VI(7.2)]{Saks} on which its proof based,
Proposition~\ref{sm} is stated in considerably greater generality than we need
to prove the following Theorem.
However, we did not
attempt to find its strongest version. For example, our proof would allow the $\varphi$ to depend
on $x$, and, rather obviously, one
may replace the $\liminf$ in \ref{sm.2} by $\limsup$ for $\alpha\le 1$.

\begin{thm}\label{Mm}
Suppose $F$ is an indefinite \MC{\alpha} integral of $f\ge 0$ on $(a,b)$.
Then $F$ is increasing on $(a,b)$.
\end{thm}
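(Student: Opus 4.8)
The plan is to deduce this immediately from Proposition~\ref{sm}, so the whole task reduces to checking that an indefinite \MC{\alpha} integral $F$ of a nonnegative $f$ satisfies hypotheses \ref{sm.1} and \ref{sm.2} of that proposition. Hypothesis \ref{sm.1} is free: by Proposition~\ref{T}\ref{T.1}, $F$ is continuous on $(a,b)$, so $\limsup_{y\nearrow x}F(y)=F(x)=\limsup_{y\searrow x}F(y)$ for every $x$, which certainly implies the required chain of inequalities. So the work is entirely in verifying \ref{sm.2}, with the \emph{same} control function $\varphi$ that witnesses \MC{\alpha} integrability, and in fact with the exceptional set being empty.

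First I would fix $x\in(a,b)$ and write, for $h>0$ small,
\[
\frac{F(x+h)-F(x)}{\varphi(x+\alpha h)-\varphi(x)}
=\frac{F(x+h)-F(x)-f(x)h}{\varphi(x+\alpha h)-\varphi(x)}
+\frac{f(x)h}{\varphi(x+\alpha h)-\varphi(x)}.
\]
The first term on the right tends to $0$ as $h\searrow 0$ by the defining limit in the \MC{\alpha} definition (restricting the two-sided limit to $y=x+h\searrow x$). The second term is the quotient of two strictly positive quantities (here is where $f(x)\ge 0$ enters, together with strict monotonicity of $\varphi$ giving $\varphi(x+\alpha h)-\varphi(x)>0$), hence is $\ge 0$ for every such $h$. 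Therefore the $\liminf_{h\searrow 0}$ of the left-hand side is $\ge 0$, which is exactly \eqref{E.sm}. Since this holds at every $x\in(a,b)$, condition \ref{sm.2} is satisfied (with no exceptional points).

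With both hypotheses of Proposition~\ref{sm} in hand, that proposition yields that $F$ is increasing on $(a,b)$, completing the proof. There is essentially no obstacle here: the only point requiring a moment's care is the bookkeeping that the two-sided limit in the definition of an \MC{\alpha} derivative specializes to the one-sided statement needed in \eqref{E.sm}, and that the sign hypothesis $f\ge 0$ is precisely what makes the ``main term'' $f(x)h/(\varphi(x+\alpha h)-\varphi(x))$ nonnegative rather than merely bounded. All the genuine analytic content — the passage from a one-sided derivative-type inequality to monotonicity, including the delicate case of an infinite right derivative of $\varphi$ — has already been absorbed into Proposition~\ref{sm}.
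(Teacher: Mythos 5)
Your proposal is correct and follows exactly the paper's route: the paper likewise deduces the theorem immediately from Proposition~\ref{sm}, with \ref{sm.1} supplied by continuity of $F$ and \ref{sm.2} by the $\alpha$-control function for $(F,f)$ together with $f\ge 0$. Your write-up merely makes explicit the splitting of the quotient into the vanishing error term and the nonnegative term $f(x)h/(\varphi(x+\alpha h)-\varphi(x))$, which the paper leaves implicit.
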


\begin{proof}
This is immediate from Proposition~\ref{sm}, since its assumption \ref{sm.1} 
holds by continuity of $F$ and \ref{sm.2} holds with the $\alpha$-control function
for the pair $(F,f)$ by the definition of the
\MC{\alpha} integral. 
\end{proof}

\begin{cor}
For any $\alpha > 0$, an indefinite \MC{\alpha} integral of a function on an interval is unique up
to an additive constant.
\end{cor}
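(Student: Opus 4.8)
The plan is to reduce the uniqueness statement to the monotonicity result of Theorem~\ref{Mm} via the linearity of the \MC{\alpha} integral, which was established at the start of Section~\ref{bs}. So suppose $F$ and $G$ are both indefinite \MC{\alpha} integrals of the same function $f$ on an interval $I$, and set $H:=F-G$. By linearity (take $a=1$, $b=-1$), $H$ is an indefinite \MC{\alpha} integral of $f-f\equiv 0$ on $I$; likewise $-H$ is an indefinite \MC{\alpha} integral of $0$. Thus it suffices to show that any indefinite \MC{\alpha} integral of the zero function is constant.

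The key step is then to apply Theorem~\ref{Mm} twice. Since $0\ge 0$, that theorem applied to $H$ shows $H$ is increasing on $I$; applied to $-H$ it shows $-H$ is increasing on $I$, i.e. $H$ is decreasing. A function that is simultaneously increasing and decreasing on an interval is constant, so $H\equiv c$ for some $c\in\R$, which is exactly the assertion $F=G+c$.

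I do not anticipate a genuine obstacle here: the only nontrivial ingredient, monotonicity of indefinite \MC{\alpha} integrals of nonnegative functions, is already in hand as Theorem~\ref{Mm}, and the passage from $f$ to the pair $(H,-H)$ uses only the (already noted) linearity of the integral. The one point to state cleanly is that ``increasing'' in the paper's terminology means nondecreasing, so that $H$ increasing together with $-H$ increasing does force $H$ to be constant rather than merely monotone; this is immediate from the convention fixed in the introduction.

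\begin{proof}
Let $F$ and $G$ be indefinite \MC{\alpha} integrals of $f$ on an interval $I$ and put $H:=F-G$. By the linearity of the \MC{\alpha} integral noted at the beginning of this section, $H$ is an indefinite \MC{\alpha} integral of $f-f=0$ on $I$, and so is $-H$. Applying Theorem~\ref{Mm} to $H$ (with $f$ replaced by the function $0\ge 0$) shows that $H$ is increasing on $I$, and applying it to $-H$ shows that $-H$ is increasing on $I$. Hence $H$ is both increasing and decreasing on $I$, so it is constant. Therefore $F=G+c$ for some $c\in\R$.
\end{proof}
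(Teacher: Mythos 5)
Your proof is correct and is essentially the paper's own argument: both apply linearity to see that $F-G$ and $G-F$ are indefinite \MC{\alpha} integrals of $0$, then invoke Theorem~\ref{Mm} twice to conclude the difference is both increasing and decreasing, hence constant.
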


\begin{proof}
If $F,G$ are two indefinite
\MC{\alpha} integrals of $F$, by linearity both $F-G$ and $G-F$ are indefinite
\MC{\alpha} integrals of zero. Hence $F-G$ and $G-F$ are both increasing
by Theorem~\ref{Mm},
showing that they differ by a constant.
\end{proof}

\section{Relations between the \MC{\alpha}, Lebesgue, Denjoy-Perron 
and Denjoy-Khintchine integrals}\label{rel}

We begin by recalling notions related to the definitions or properties of the Lebesgue, Denjoy-Perron 
and Denjoy-Khintchine integrals that we use in our arguments. 
They all come from \cite{Saks},
where much more material on these integrals and notions may be found.

\begin{dfn}\label{VB}
A real-valued function $F$ defined on a set $E\subset\R$ is said to be 
\begin{itemize}
\item
of bounded variation (\VB) on $E$ if there is $v\in[0,\infty)$ such that
$\sum_i|F(b_i)-F(a_i)|\le\nobreak v$ 
for every sequence of non-overlapping intervals whose end-points belong to $E$,
\item
absolutely continuous (AC) on $E$ if for every $\varepsilon>0$ there is $\delta>0$ 
such that for every sequence of non-overlapping intervals whose end-points belong to $E$,
the inequality $\sum_i(b_i-a_i)<\delta$ implies $\sum_i|F(b_i)-F(a_i)|<\varepsilon$,
\item
generalized absolutely continuous (\ACG) on $E$ if it is continuous on $E$
and $E$ is the union of countably many sets on which $F$ is absolutely continuous,
\end{itemize}
\end{dfn}

\noindent For the next definition we recall that the oscillation of a function $F$ on a set is
\[\osc(F,E):=\sup_{x,y\in E} |F(x)-F(y)|.\]

\begin{dfn}
Assuming in Definition~\ref{VB} that $F$ is defined on an interval containing $E$, and replacing
$\sum_i|F(b_i)-F(a_i)|$ by $\sum_i\osc(F,[a_i,b_i])$ we obtain the notion
of functions 
\begin{itemize}
\item
of bounded variation in the restricted sense (\VBs) on $E$,
\item
absolutely continuous in the restricted sense (\ACs) on $E$,
\item
generalized absolutely continuous in the restricted sense (\ACGs) on $E$,
\end{itemize}
respectively.
\end{dfn}

Although we do not need it, we point out the important results 
that \ACGs\ functions are
differentiable almost everywhere and \ACG\ functions are approximately differentiable 
almost everywhere. For the definition of approximate derivative see \cite[VII(\S3)]{Saks}; here
we just need to know that if an ordinary derivative $F'(x)$ exists, then so does
the approximate derivative $F'_{\mathrm{ap}}(x)$ and $F'_{\mathrm{ap}}(x)=F'(x)$.

Finally, we collect so called descriptive definitions of the three integrals that we use. 
Again, \cite{Saks} is a reference for (often deep) proofs of equivalence
to more usual definitions.

\begin{fct}
Suppose $F$ and $f$ are real-valued functions on an interval $(a,b)\subset\R$.
\begin{itemize}
\item
$F$ is an indefinite Lebesgue integral of $f$ on $(a,b)$ if 
it is \AC\ (or equivalently \ACs) on $(a,b)$ and $F'(x)=f(x)$ for almost all $x\in(a,b)$,
\item
$F$ is an indefinite Denjoy-Perron integral of $f$ on $(a,b)$ if 
it is \ACGs\ on $(a,b)$ and $F'(x)=f(x)$ for almost all $x\in(a,b)$,
\item
$F$ is an indefinite Denjoy-Khintchine integral of $f$ on $(a,b)$ if 
it is \ACG\ on $(a,b)$ and $F'_{\mathrm{ap}}(x)=f(x)$ for almost all $x\in(a,b)$.
\end{itemize}
\end{fct}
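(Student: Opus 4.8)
The plan is not to prove anything genuinely new here: these three descriptive (``fundamental theorem'') characterizations are classical, and the intended argument is really to point to where they, and the equivalences underlying them, are established in \cite{Saks}, adding only the short elementary observation that makes the \AC\ and \ACs\ formulations interchangeable on an interval.

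For the Lebesgue integral the assertion is the Fundamental Theorem of Calculus: if $f$ is Lebesgue integrable on $(a,b)$ and $F(x)=c+\int_{x_0}^x f$ for some $x_0\in(a,b)$, then $F$ is \AC\ on $(a,b)$ and $F'=f$ almost everywhere, while conversely every \AC\ function on $(a,b)$ is the indefinite Lebesgue integral of its almost everywhere defined derivative; this is \cite[Chapter~VII]{Saks}. To see that \AC\ on $(a,b)$ may be replaced by \ACs\ on $(a,b)$, one argues elementarily: \ACs\ implies \AC\ because $\osc(F,[a_i,b_i])\ge|F(b_i)-F(a_i)|$, and for the converse, given $\varepsilon>0$ one takes $\delta>0$ from the \AC\ condition and, for non-overlapping intervals $[a_i,b_i]$ with endpoints in $(a,b)$ and $\sum_i(b_i-a_i)<\delta$, chooses in each $[a_i,b_i]$ points $c_i,d_i$ with $|F(d_i)-F(c_i)|=\osc(F,[a_i,b_i])$ (the extrema are attained, since \AC\ forces continuity); the intervals with endpoints $c_i,d_i$ are non-overlapping of total length $<\delta$, so $\sum_i\osc(F,[a_i,b_i])=\sum_i|F(d_i)-F(c_i)|<\varepsilon$.

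For the Denjoy-Perron and Denjoy-Khintchine integrals the conditions stated are precisely the descriptive definitions of the Denjoy integral in the restricted, respectively the wide, sense. That the indefinite integral of an integrable $f$ is \ACGs\ (resp.\ \ACG) with ordinary (resp.\ approximate) derivative equal to $f$ almost everywhere, and conversely that any function with this regularity and a.e.\ derivative $f$ is an indefinite integral of $f$, together with the equivalence of the restricted-sense Denjoy integral with the Perron integral --- and hence with what we here call the Denjoy-Perron integral --- are the main results of \cite[Chapter~VIII]{Saks}; the differentiability a.e.\ of \ACGs\ functions and the approximate differentiability a.e.\ of \ACG\ functions have already been quoted above. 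The converse directions in both cases hinge on the statement that an \ACGs\ function with almost everywhere vanishing ordinary derivative, resp.\ an \ACG\ function with almost everywhere vanishing approximate derivative, is constant; this is the analytic core of the Denjoy theory, and it is the step I would expect to be the real obstacle were one to insist on a self-contained treatment rather than citing \cite{Saks}.
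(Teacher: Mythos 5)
Your proposal matches the paper exactly in spirit: the paper offers no proof of this Fact, stating only that \cite{Saks} is the reference for the (often deep) equivalences with the usual definitions, which is precisely your strategy of citing the classical descriptive characterizations from \cite{Saks}. Your added elementary argument that \AC\ and \ACs\ coincide for functions on an interval (oscillation attained by continuity on each compact $[a_i,b_i]$, then apply the \AC\ condition to the extremal subintervals) is correct and a harmless supplement.
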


\subsection{Relations between Lebesgue and \MC{\alpha} integrals}

From Proposition~\ref{T} and the coincidence of the \MC{} and Denjoy-Perron integral
we know that  \MC{\alpha} and Lebesgue integrals agree provided that they both exist.
Here we show that in general the ``both'' cannot be replaced by existence of just one of them.
In our first result we observe that the functions providing an example cannot be bounded.

\begin{prp}
Every measurable function that is locally bounded on $(a,b)$
has an indefinite \MC{\alpha} integral on $(a,b)$ for every $\alpha>0$.
\end{prp}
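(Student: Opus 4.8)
The plan is to construct an explicit $\alpha$-control function $\varphi$ that works simultaneously for the whole interval, using the local boundedness of $f$ to kill the numerator $F(y)-F(x)-f(x)(y-x)$ and the freedom in choosing $\varphi$ to control the error. The natural candidate for $F$ is the Lebesgue indefinite integral of $f$ (which exists locally and, after patching, globally on $(a,b)$ since $f$ is locally bounded and measurable, hence locally Lebesgue integrable); so fix $F$ to be such a primitive. The task reduces to producing a strictly increasing $\varphi$ with
\[
\lim_{y\to x}\frac{F(y)-F(x)-f(x)(y-x)}{\varphi(x+\alpha(y-x))-\varphi(x)}=0
\]
for every $x\in(a,b)$.

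First I would exhaust $(a,b)$ by a sequence of compact subintervals $K_n$ on which $|f|\le M_n$. On each $K_n$, for $x,y\in K_n$ we have the crude bound $|F(y)-F(x)-f(x)(y-x)|\le 2M_n|y-x|$ from the mean-value-type estimate for the Lebesgue integral of a bounded function. This shows the numerator is $O(|y-x|)$ locally, so it would suffice to have a $\varphi$ whose increments $\varphi(x+\alpha(y-x))-\varphi(x)$ go to zero slower than linearly in $|y-x|$ — i.e. a $\varphi$ that is ``infinitely steep'' everywhere, such as a strictly increasing function with $\varphi'(x)=+\infty$ at every point. A standard such construction is $\varphi(t)=\int_{c}^{t}g$ where $g$ is a positive, locally integrable function that blows up on a countable dense set, or alternatively a lacunary-type series $\varphi(t)=\sum_k 2^{-k}\psi_k(t)$ of strictly increasing functions arranged so that the right and left lower derivates are infinite everywhere. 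One then has, for fixed $x$, that $(\varphi(x+\alpha h)-\varphi(x))/h\to\infty$ as $h\to0$, whence the quotient above tends to $0$ since its numerator is $O(h)$. The strict monotonicity of $\varphi$ guarantees the denominator never vanishes for $y\neq x$.

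The one genuine subtlety — and the step I expect to be the main obstacle — is that the numerator bound $O(|y-x|)$ has a constant $2M_n$ that depends on $n$ (i.e. on how close $x$ is to the endpoints $a,b$), while a single $\varphi$ must handle all of $(a,b)$ at once; and worse, near a given $x$ the bound $|F(y)-F(x)-f(x)(y-x)|\le 2M_n|y-x|$ is only the trivial estimate, whereas at Lebesgue points of $f$ one actually has $o(|y-x|)$, but not uniformly. The remedy is that we do not need uniformity: $\varphi$ is chosen once, with infinite derivate everywhere, and the limit is taken at each $x$ separately, so for that fixed $x$ the local constant $M_n$ is just a constant and is absorbed by $\varphi(x+\alpha h)-\varphi(x)=\omega(h)$ growing faster than linearly. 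The care needed is only in the construction of $\varphi$: it must be a well-defined real-valued strictly increasing function on all of $(a,b)$ (so the generating density must be locally — not globally — integrable, or the series must converge locally uniformly) while still having infinite lower derivate at every point. Verifying those two properties of the constructed $\varphi$ is the heart of the argument; once $\varphi$ is in hand, plugging the bound on the numerator into the quotient and letting $y\to x$ finishes the proof for every $\alpha>0$ simultaneously, since $x+\alpha(y-x)\to x$ regardless of $\alpha$.
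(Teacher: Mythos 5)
Your plan hinges on producing a strictly increasing $\varphi:(a,b)\to\R$ whose lower derivate is $+\infty$ at \emph{every} point, so that $(\varphi(x+\alpha h)-\varphi(x))/h\to\infty$ for each fixed $x$ and the crude bound $|F(y)-F(x)-f(x)(y-x)|\le 2M_n|y-x|$ suffices. No such $\varphi$ exists: by Lebesgue's theorem a monotone real-valued function is differentiable with \emph{finite} derivative at almost every point, so an increasing $\varphi$ can have infinite derivate only on a set of measure zero. The constructions you sketch confirm this rather than circumvent it: if $\varphi(t)=\int_c^t g$ with $g$ locally integrable and blowing up on a countable dense set, then $\varphi'=g<\infty$ a.e., and the same holds for the lacunary series. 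So the step ``plug the $O(h)$ numerator bound into a denominator that is superlinear at every $x$'' cannot be carried out, and this is exactly the heart of the argument, not a technicality about endpoints or the constants $M_n$.

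The repair is to use the almost everywhere information you set aside. Since $F$ is a Lebesgue primitive of the locally bounded $f$, one has $F'(x)=f(x)$ for a.e.\ $x$, so at those $x$ the numerator is already $o(|y-x|)$ and any $\varphi$ containing a linear term (e.g.\ the summand $x$) handles them, with no steepness needed. Superlinear growth of $\varphi$ is required only at the exceptional null set $N=\{x: F'(x)\ne f(x)\}$, and there it \emph{is} achievable: cover $N$ by open sets $G_k\supset N$ with $|G_k|<4^{-k}$ and set $\varphi(x)=x+\sum_{k}2^k|(a,x)\cap G_k|$, which converges and is increasing because $2^k|G_k|\le 2^{-k}$. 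For $x\in N$ and $y$ close to $x$ the relevant segment lies in $G_k$, so $\varphi(x+\alpha(y-x))-\varphi(x)\ge 2^k\alpha|y-x|$, and since $k$ is arbitrary the quotient tends to $0$ there as well, using only the local bound $|f|\le c$ near $x$ to control the numerator by $2c|y-x|$. This two-case split (a.e.\ differentiability where $\varphi$ need only be linear, tailored steepness of $\varphi$ on the null set) is the mechanism your proposal is missing, and it is also the route the paper takes.
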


\begin{proof}
Let $F$ be an indefinite Lebesgue integral of a bounded measurable function $f$ on $(a,b)$.
Let $M:=\{x: F'(x)=f(x)\}$ and $N=(a,b)\setminus M$. Since $N$ has Lebesgue measure zero,
there are open sets $(a,b)\supset G_k\supset N$ such that $|G_k|<4^{-k}$.
Define $\varphi_k(x):=2^k|(a,x)\cap G_k|$ and $\varphi(x):=x+\sum_{k=1}^\infty \varphi_k(x)$. Clearly,
$\varphi$ is an increasing function on $(a,b)$.
We fix $\alpha>0$ and show that $\varphi$ is an $\alpha$-control function for 
the pair $(F,f)$. 

If $x\in M$, \eqref{EBM} follows immediately from $F'(x)=f(x)$, since then
\[
\limsup_{y \to x}\frac{|F(y)-F(x)-f(x)(y-x)|}{|\varphi(x+\alpha(y-x))-\varphi(x)|}
\le
\lim_{y \to x}\frac{|F(y)-F(x)-f(x)(y-x)|}{\alpha |y-x|}
=0.
\]

If $x\in N$, choose $c\in(0,\infty)$ such that $|f|\le c$ on a neighbourhood of $x$.
Then $|F(y)-F(x)-f(x)(y-x)|\le 2c|y-x|$ 
and, given any $k\in\N$, we see that 
\[\varphi(x+\alpha(y-x))-\varphi(x)=2^k\alpha(y-x)\]
for $y$ close enough to $x$. Hence
\[
\limsup_{y \to x}\frac{|F(y)-F(x)-f(x)(y-x)|}{|\varphi(y)-\varphi(x)|}
\le
\lim_{y \to x}\frac{2c|y-x|}{2^k|y-x|}
=2^{-k-1}c.
\]
Since $k$ can be arbitrarily large,  \eqref{EBM} follows.
\end{proof}

The first part of the above proof also shows that, if $F'(x)=f(x)$ at
every point $x\in(a,b)$, $F$ is an indefinite \MC{\alpha} integral of $f$.
Together with
well known examples of non-absolutely integrable derivatives we get
examples of functions that are \MC{\alpha} integrable for every $\alpha>0$
but not Lebesgue integrable. For completeness, we record this in the
following observation.

\begin{obs}
If $F$ is everywhere differentiable on $(a,b)$, then it is an indefinite \MC{\alpha} integral
of $F'$ for every $\alpha >0$. Hence there are functions that are \MC{\alpha} integrable 
for every $\alpha>0$ but not Lebesgue integrable.
\end{obs}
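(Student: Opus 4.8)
The plan is to establish the first assertion by exhibiting an explicit $\alpha$-control function, and then to deduce the second assertion from a textbook example of a differentiable function with non-integrable derivative.

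For the first assertion I would simply take $\varphi$ to be the identity function $\varphi(t)=t$ on $(a,b)$, which is strictly increasing. With this choice $\varphi(x+\alpha(y-x))-\varphi(x)=\alpha(y-x)$ for all $x,y\in(a,b)$, so for every $x\in(a,b)$,
\[
\lim_{y\to x}\frac{F(y)-F(x)-F'(x)(y-x)}{\varphi(x+\alpha(y-x))-\varphi(x)}
=\frac1\alpha\lim_{y\to x}\left(\frac{F(y)-F(x)}{y-x}-F'(x)\right)=0,
\]
since $F$ is differentiable at $x$. This is exactly the remark recorded after the previous proof (with $f=F'$), and no hypothesis beyond pointwise differentiability of $F$ is used. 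Hence $F$ is an indefinite \MC{\alpha} integral of $F'$ for every $\alpha>0$.

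For the second assertion I would invoke the classical example: on $(-1,1)$ set $F(x)=x^2\sin(x^{-2})$ for $x\ne0$ and $F(0)=0$. Then $F$ is differentiable at every point — at $0$ directly from $|F(h)-F(0)|\le h^2$, which gives $F'(0)=0$, and elsewhere by the chain rule — so by the first part $F$ is \MC{\alpha} integrable for every $\alpha>0$. On the other hand $F'(x)=2x\sin(x^{-2})-2x^{-1}\cos(x^{-2})$ for $x\ne0$, and the substitution $u=x^{-2}$ turns $\int_0^1|2x^{-1}\cos(x^{-2})|\,dx$ into $\int_1^\infty u^{-1}|\cos u|\,du=\infty$; equivalently $F$ is not of bounded variation on any neighbourhood of $0$ and so is not \AC. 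Thus $F'$ is not Lebesgue integrable. For a general open interval $I$ one obtains such an example by a translation, replacing $x$ by $x-c$ for some $c\in I$.

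I do not anticipate a genuine obstacle: the content is just the choice $\varphi=\mathrm{id}$ for the first part and the recollection of the standard example for the second. The only points worth checking explicitly are that the ratio in the displayed limit is legitimate for $y$ on either side of $x$ (it is, since $\alpha(y-x)$ is nonzero and has the sign of $y-x$) and that the chosen $F$ really is differentiable at the troublesome point, which is the routine estimate above.
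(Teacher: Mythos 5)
Your proof is correct and follows essentially the same route as the paper: the paper's justification is precisely the remark that the identity-type control function from the preceding proposition works when $F'(x)=f(x)$ at every point, combined with a ``well known'' non-absolutely integrable derivative, for which you supply the standard example $x^2\sin(x^{-2})$ explicitly. No gaps; your explicit verification of the control function and of the divergence of $\int|F'|$ merely spells out what the paper leaves to the reader.
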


The following Lemma provides the main building block
in definition of a function that is Lebesgue but not \MC{\alpha} integrable for any $0<\alpha<1$.

\begin{lem}\label{LC2}
For any interval $J\subset\R$ and $0<\varepsilon,\tau<1$ 
there is a measurable function $f:\R\to[0,\infty)$ such that 
\begin{enumerate}
\item\label{LC2.i}
$\int_{-\infty}^\infty f(x)\,dx\le\varepsilon$;
\item\label{LC2.ab}
there is a finite collection of  
intervals $[a_i,b_i]\subset J$ such that $\sum_i\int_{a_i}^{b_i} f(x)\,dx>1/\varepsilon$
and the intervals $[a_i,a_i+\tau(b_i-a_i)]$
are mutually disjoint.
\end{enumerate}
\end{lem}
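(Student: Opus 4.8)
The plan is to build $f$ as a scaled and translated copy of a single "bump" profile, and to engineer a family of nested or side-by-side intervals so that the integral of $f$ over each $[a_i,b_i]$ is large while the mass of $f$ is concentrated outside the initial fraction $[a_i,a_i+\tau(b_i-a_i)]$ of each interval. Concretely, one chooses on a reference interval, say $[0,1]$, a nonnegative measurable $g$ supported in $(\tau,1)$ with $\int_0^1 g = 1$ but such that $\int_0^t g$ stays small for $t$ somewhat beyond $\tau$; the point is that $f$ restricted to $[a_i,b_i]$ picks up the whole unit of mass, but the disjointness is required only of the tiny left pieces $[a_i,a_i+\tau(b_i-a_i)]$, on which $f$ will be arranged to vanish (or be negligible). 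First I would fix $N$ a large integer with $N\varepsilon^2 > 1$, and inside $J$ select $N$ pairwise disjoint subintervals $[a_i,b_i]$; disjointness of the full intervals is more than enough to force disjointness of the left fractions. On each $[a_i,b_i]$ put a rescaled copy of $g$ normalised so that $\int_{a_i}^{b_i} f = 1/(N\varepsilon)$ — wait, that would make the total $1/\varepsilon$, too big for \ref{LC2.i}; instead one must be more careful.

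The correct bookkeeping is: we want $\sum_i\int_{a_i}^{b_i} f > 1/\varepsilon$ from \ref{LC2.ab} but simultaneously $\int_\R f \le \varepsilon$ from \ref{LC2.i}, so the intervals $[a_i,b_i]$ must \emph{overlap} heavily even though their initial $\tau$-fractions do not. This is the real content of the lemma and the reason the hypothesis $\tau<1$ matters. The construction I would use: take a single interval $[a,b]\subset J$, and inside it place many intervals $[a_i,b_i]$ all sharing (roughly) the same right portion but with left endpoints $a_i$ marching rightward, chosen so that $a_i + \tau(b_i-a_i)\le a_{i+1}$, i.e. the left $\tau$-fractions tile a portion of $[a,b]$ without overlap while the intervals themselves all reach almost to $b$. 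Then let $f$ be a single bump of total mass $\le\varepsilon$ supported near the common right end $b$ (more precisely, in the region covered by every $[a_i,b_i]$ but by none of the left fractions $[a_i,a_i+\tau(b_i-a_i)]$). Each of the $\ge 1/\varepsilon^2$ intervals then captures essentially the full mass $\sim\varepsilon$ of $f$, giving $\sum_i\int_{a_i}^{b_i} f \gtrsim (1/\varepsilon^2)\cdot\varepsilon = 1/\varepsilon$, as required, while $\int_\R f\le\varepsilon$ is immediate.

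The main obstacle, and the step needing genuine care, is the packing: one must verify that inside a \emph{finite} interval $J$ one can fit $n\ge 1/\varepsilon^2$ intervals $[a_i,b_i]$ whose left $\tau$-fractions are disjoint but whose union has a common core. Writing $[a_i,b_i] = [a_i, b]$ for a fixed right endpoint $b$ (so $b_i=b$), the left fraction is $[a_i, a_i+\tau(b-a_i)] = [a_i,(1-\tau)a_i+\tau b]$, and disjointness of consecutive ones reads $(1-\tau)a_i + \tau b \le a_{i+1}$. Starting from $a_1 = a$ this is a simple affine recursion $a_{i+1}\ge (1-\tau)a_i+\tau b$ whose solution approaches $b$ geometrically at rate $1-\tau<1$; since the gaps to $b$ shrink by a factor $1-\tau$ each step, one gets \emph{infinitely} many admissible $a_i$ inside $[a,b)$, in particular at least $n$ of them, and one simply stops after $n$. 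Finally one places the mass of $f$ in $(a_n + \tau(b-a_n),\, b)$, a nonempty interval lying inside every $[a_i,b]$ and outside every left $\tau$-fraction, take $f = (\varepsilon/\text{length})\cdot\mathbf 1$ there (a measurable function into $[0,\infty)$), and read off both conclusions. Everything else is routine estimation.
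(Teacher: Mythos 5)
Your construction is essentially the same as the paper's: intervals $[a_i,b]$ with a common right endpoint, left endpoints driven toward $b$ by an affine recursion so the $\tau$-fractions are disjoint, and a single constant bump of total mass $\varepsilon$ placed beyond all the $\tau$-fractions, with $n>1/\varepsilon^2$ intervals giving $n\varepsilon>1/\varepsilon$. The only adjustment needed is to make the recursion strict (the paper does this by fixing $\sigma\in(\tau,1)$ and setting $a_{i+1}=a_i+\sigma(b-a_i)$), so that the closed intervals $[a_i,a_i+\tau(b-a_i)]$ are genuinely disjoint rather than touching at endpoints, and to keep the strict count $n\varepsilon^2>1$ from your first paragraph rather than the later $n\ge 1/\varepsilon^2$.
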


\begin{proof}
We choose $\sigma\in(\tau,1)$,
an interval $[a,b]\subset J$ and $m\in\N$ such that
$m>1/\varepsilon^2$. For $i=0,1,\dots,m$ we recursively define
$a_0:=a$ and $a_i:=a_{i-1}+\sigma(b-a_i)$ and
notice that $a=a_0<a_1<\dots<a_{m}<b$.
Let
\[ f(x):= 
\begin{cases} \varepsilon/(b-a_m)& x\in [a_m,b]\\ 0 & x \notin [a_m,b] \end{cases}
\]
Then \ref{LC2.i} holds since $\int_{-\infty}^\infty f(x)\,dx=\varepsilon$
and to satisfy \ref{LC2.ab} we may take the intervals 
$[a_i,b_i]:=[a_i,b]$, $i=0,\dots,m-1$.
Indeed, with this choice $\sum_i\int_{a_i}^{b_i} f(x)\,dx =m\varepsilon>1/\varepsilon$ and
$a_i+\tau(b_i-a_i)< a_i+\sigma (b-a_i)=a_{i+1}$, and so the intervals
$[a_i,a_i+\tau(b_i-a_i)]$ are mutually disjoint.
\end{proof}

\begin{thm}\label{M.1}
There is a Lebesgue integrable function $f:\R\to[0,\infty)$ 
that is, for any $0<\alpha<1$, not \MC{\alpha} integrable on any interval $I\subset\R$.
\end{thm}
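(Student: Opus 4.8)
Here is the plan.

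\medskip

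The idea is to take $f$ to be a superposition of translated, rescaled copies of the nonnegative function supplied by Lemma~\ref{LC2}, organized into a tree. Fix a nested sequence of families of intervals: $\mathcal I_0=\{[k,k+1]:k\in\mathbb Z\}$, and pass from $\mathcal I_{\ell-1}$ to $\mathcal I_\ell$ by applying Lemma~\ref{LC2} (in the explicit form displayed in its proof) to each $D\in\mathcal I_{\ell-1}$, with parameters $\tau_{\ell-1}<\sigma_{\ell-1}$ depending only on the level and with a small mass $e_D$; this puts inside $D$ a nonnegative function $h_D$ of total mass $e_D$, supported in the right portion of $D$, together with the children of $D$, namely the $\tau_{\ell-1}$-dilates of the Lemma's intervals, which lie in the complementary left portion of $D$, the $i$-th one inside the ``slab'' $[a_i^D,a_{i+1}^D]$; the children make up $\mathcal I_\ell$. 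Put $f_0:=\sum_{\ell\ge1}\sum_{D\in\mathcal I_{\ell-1}}h_D$ and let $\mathcal C:=\bigcap_\ell\bigcup\mathcal I_\ell$ be its ``bad set''. Repeating the construction inside each complementary interval of $\mathcal C$, and then inside each complementary interval of the Cantor sets so obtained, and so on, and taking $f$ to be the sum of all the $f_0$'s produced, one can — by keeping the pieces small — arrange that the union of $\mathcal C$ with all the derived Cantor sets is dense in $\R$, and — by choosing the masses small enough level by level and piece by piece — that $\int_\R f<\infty$, so $f$ is Lebesgue integrable. Finally the level parameters are taken with $0<\tau_\ell<\sigma_\ell<1$ and $(\sigma_\ell-\tau_\ell)/(1-\tau_\ell)$ increasing to $1$. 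The quantity $(\sigma_\ell-\tau_\ell)/(1-\tau_\ell)$, always $<1$, will be exactly the supremum of the $\alpha$'s for which a level-$\ell$ node can obstruct \MC{\alpha} integrability; that it can never reach $1$ (this would demand $\tau_\ell\ge1$) is forced, and matches the fact that \MC{1} is the Denjoy--Perron integral.

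\medskip

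Now fix an open interval $I$ and $\alpha<1$, and suppose for contradiction that $F$ is an indefinite \MC{\alpha} integral of $f$ on $I$ with $\alpha$-control $\varphi$. By Theorem~\ref{Mm} and Proposition~\ref{T}, $F$ is continuous and increasing with $F'=f$ a.e., so $F(y)-F(x)\ge\int_x^y f$ whenever $x<y$ in $I$. Density of the bad sets yields a node $D_0$ (of one of the trees) with $\overline{D_0}\subset I$ of arbitrarily large level; take its level $\ell_0$ so large that $(\sigma_\ell-\tau_\ell)/(1-\tau_\ell)\ge\alpha$ for all $\ell\ge\ell_0$, and set $V:=\varphi(\sup D_0)-\varphi(\inf D_0)<\infty$. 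Working below $D_0$, \emph{prune} the tree: from the children of each node $D$ keep only those whose slab $[a_i^D,a_{i+1}^D]$ has $\varphi$-variation at most $e_D$. Since the slabs of $D$ are non-overlapping and lie in $D$, at most $V/e_D$ children are discarded, whereas Lemma~\ref{LC2} allows the number of children to exceed $1/e_D^2$; hence once the masses have fallen below a threshold depending on $V$ every node keeps at least two children, so the intersection $\mathcal C'$ of the retained families is an uncountable compact subset of $\overline{D_0}\subset I$.

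\medskip

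Let $\xi$ be any point of $\mathcal C'$. Since $\xi$ lies at each level in a child, which is strictly to the left of its parent's mass, $f(\xi)=0$; and $\varphi$ cannot have a one-sided jump at $\xi$, for then every slab around $\xi$ would have $\varphi$-variation bounded below by the jump, contrary to the retained slabs having $\varphi$-variation tending to $0$. For large $\ell$ let $D_{\ell-1}\ni\xi$ be the level-$(\ell-1)$ node, let $[a_i,a_{i+1}]$ be the retained slab of $D_{\ell-1}$ containing $\xi$, and set $y_\ell:=\sup D_{\ell-1}$, so $y_\ell\to\xi$. Then $\xi<a_{i+1}\le a_m<y_\ell$, so the support of $h_{D_{\ell-1}}$ lies in $[\xi,y_\ell]$ and $F(y_\ell)-F(\xi)\ge\int_\xi^{y_\ell}h_{D_{\ell-1}}=e_{D_{\ell-1}}$. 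On the other hand, a short computation from the recursion of Lemma~\ref{LC2} shows that $\alpha\le(\sigma_{\ell-1}-\tau_{\ell-1})/(1-\tau_{\ell-1})$ — this is where $\alpha<1$ is essential — forces $\xi+\alpha(y_\ell-\xi)\le a_{i+1}$, whence $\varphi(\xi+\alpha(y_\ell-\xi))-\varphi(\xi)\le\varphi(a_{i+1})-\varphi(a_i)\le e_{D_{\ell-1}}$. Therefore
\[
\frac{F(y_\ell)-F(\xi)-f(\xi)(y_\ell-\xi)}{\varphi(\xi+\alpha(y_\ell-\xi))-\varphi(\xi)}\ \ge\ \frac{e_{D_{\ell-1}}}{e_{D_{\ell-1}}}\ =\ 1
\]
for all large $\ell$, contradicting that this difference quotient must tend to $0$ as $y\to\xi$. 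Since $I$ and $\alpha<1$ were arbitrary, $f$ is never \MC{\alpha} integrable for $\alpha<1$.

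\medskip

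I expect the main obstacle to be the bookkeeping that makes the two demands on the parameters compatible: the branching numbers must be large enough that the $\varphi$-driven pruning still leaves a branching — hence uncountable — tree, yet the masses $e_D$ must be small enough that $f$ is integrable, and the branching number of a node is only fixed after its mass. This is self-referential but routine, since $1/e_D^2$ eventually dominates $V/e_D+2$. The one genuinely computational point is the implication $\alpha\le(\sigma_{\ell-1}-\tau_{\ell-1})/(1-\tau_{\ell-1})\Rightarrow\xi+\alpha(y_\ell-\xi)\le a_{i+1}$, read off from the explicit recursion and spacing inequality of Lemma~\ref{LC2}; everything else — continuity and monotonicity of $F$ together with $F(y)-F(x)\ge\int_x^y f$, finiteness of $\int f$, and density of the bad sets — is standard.
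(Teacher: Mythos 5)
Your core mechanism is sound and is in fact the same one the paper exploits through Lemma~\ref{LC2}: the mass sits far to the right of each interval $[a_i,b]$, while the protected zones $[a_i,a_i+\tau(b-a_i)]$ are disjoint, so for $\alpha$ below the critical ratio the control increment $\varphi(x+\alpha(y-x))-\varphi(x)$ is trapped in a region of small $\varphi$-variation and cannot pay for the jump $\ge e_D$ of $F$; your computation $\alpha\le(\sigma-\tau)/(1-\tau)\Rightarrow\xi+\alpha(y_\ell-\xi)\le a_{i+1}$ is correct (provided the Lemma is run with $b=\sup D$, as you intend). But your global architecture is genuinely different from the paper's. The paper places one copy of Lemma~\ref{LC2} around each rational with $\tau_k\to1$, assumes the Lebesgue primitive is an \MC{\alpha} integral, applies the Baire Category Theorem to the sets where the control inequality holds at scale $2^{-l}$, and then sums the inequality over finitely many disjoint intervals to exceed the total variation $\varphi(v)-\varphi(u)$. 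You instead build a nested tree, prune children by the $\varphi$-variation of their slabs (at most $V/e_D$ casualties against $>1/e_D^2$ children), and obtain a contradiction at a \emph{single} point $\xi$ of the surviving compact set along the sequence $y_\ell=\sup D_{\ell-1}$. This buys you a pointwise failure of the defining limit with no Baire category and no uniformity in $\delta_x$, at the cost of a heavier recursive construction; the paper's version is shorter because one dense family of building blocks plus one category argument does all the work. Your use of Theorem~\ref{Mm} to get $F$ increasing and hence $F(y)-F(x)\ge\int_x^y f$ is a clean substitute for the paper's identification of the \MC{\alpha} integral with the Lebesgue primitive.

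Two points need repair, both local. First, the assertion $f(\xi)=0$ is not true as stated: to make the bad sets dense you must also build derived trees inside the mass-support gaps $[a_m,b]$ of earlier generations (otherwise $f$ is bounded, hence \MC{\alpha} integrable, on intervals inside those gaps), and then a point $\xi$ of a derived bad set carries a positive background value from finitely many earlier masses. This is fixable: the background is locally constant near all but countably many points (the endpoints of the finitely many supports through $\xi$), so choosing $\xi$ in your uncountable $\mathcal{C}'$ off that countable set, the background's contribution to $\int_\xi^{y_\ell}f$ cancels exactly against $f(\xi)(y_\ell-\xi)$ and the numerator is still $\ge e_{D_{\ell-1}}$; you must also, as the paper does, redefine $f:=0$ on the null set where the infinite sum diverges. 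Second, the pruning bookkeeping should be ordered so it cannot die out before the masses are small: fix a compact $[c,d]\subset I$ around a bad-set point, use the a priori bound $V\le\varphi(d)-\varphi(c)$, and only start pruning at a descendant $D_1$ of $D_0$ deep enough that all masses below it are under the corresponding threshold (this also requires node diameters to shrink, so keep $\tau_\ell$ bounded away from $1$ and let $\sigma_\ell\to1$, which is compatible with $(\sigma_\ell-\tau_\ell)/(1-\tau_\ell)\nearrow1$). With these adjustments your argument goes through.
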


\begin{proof}
Let $r_k$ be an enumeration of all rational numbers, 
$\varepsilon_k:= 2^{-k}$ and $\tau_k:=1-2^{-k}.$ 
Obtain $f_k$ from Lemma~\ref{LC2} used with these
$\varepsilon_k,\tau_k$ and  $I_k=(r_k-\varepsilon_k,r_k+\varepsilon_k)$. 
Since the function $\sum_{k=1}^\infty f_k$ is Lebesgue integrable, it is finite almost everywhere.
Hence the set $N:=\{x\in\R: \sum_{k=1}^\infty f_k(x)=\infty\}\cup\{r_k:k=1,2,\dots\}$
is Lebesgue null.
Choose a $G_\delta$ Lebesgue null set $G$ containing $N$ and define
\[f (x):= \begin{cases} \sum_{k=1}^\infty f_k(x) & x\notin G,\\ 0 & x\in G.\end{cases}\]
Then $f:\R\to[0,\infty)$ is Lebesgue integrable; let $F$ be its indefinite Lebesgue integral.

Suppose $0<\alpha<1$ and $f$ is \MC{\alpha} integrable on some interval $I$.
By Proposition~\ref{T}, $f$
is an \MC{\alpha} derivative of $F$. 
Let
$\varphi$ be an $\alpha$-control function for $(F,f)$. 
For each $x\in N$ find $\delta_x>0$ such that
\[|F(x+h)-F(x)|\le \varphi(x+\alpha h)-\varphi(x)\]
whenever $0<h<\delta_x$.

Let $N_l:=\{x\in G: \delta_x>2^{-l}\}$
and infer from the Baire Category Theorem that
there are $l\in\N$ and an interval $J=(u,v)\subset I$  
such that $N_{l}\cap J$ is dense in $J$.
Since $\varepsilon_k\to 0$ and $\tau_k\to 1$, the inequalities
$\alpha<\tau_k$ and $\varphi(u)-\varphi(v)<1/\varepsilon_k$
hold for all sufficiently large~$k$. Using also that 
$J$ contains $[r_k-\varepsilon_k,r_k+\varepsilon_k]$
for infinitely many $k$, we find
$k>l$ such that $\alpha<\tau_k$, $\varphi(u)-\varphi(v)<1/\varepsilon_k=2^k$
and $[a,b]:=[r_k-\varepsilon_k,r_k+\varepsilon_k]\subset J$.

Recalling that $f_k$ has been found by using Lemma~\ref{LC2},
we infer from its statement~\ref{LC2.ab}
that there is a finite collection of  
intervals $[a_i,b_i]\subset (a,b)$ such that the intervals $[a_i,a_i+\tau_k(b_i-a_i)]$
are mutually disjoint and $\sum\int_{a_i}^{b_i} f_k(x)\,dx>1/\varepsilon_k=2^k$.
Since $N_l\cap(a,b)$ is dense in $(a,b)$ and $\alpha<\tau_k$, there are
$x_i\in(a,b)$, $x_i\le a_i$ close enough to $a_i$ such that the intervals 
$[x_i,x_i+\alpha(b_i-x_i)]$ are also mutually disjoint.
Recalling that $k>l$, we see that
$b_i-x_i\le 2\varepsilon_k\le 2^{-l}$, so $\delta_{x_i}>2^{-l}\ge b_i-x_i$
and the definition of $\delta_{x_i}$ implies 
\[\varphi(x_{i} +\alpha(b_i-a_i))-\varphi(x_{j})\ge F(b_{i}) - F(x_{i}).\]
Since $\varphi$ is increasing and 
$[x_i,x_i+\alpha(b_i-x_i)]$ are mutually disjoint 
subintervals of $(u,v)$, we get
\begin{align*}
2^k>\varphi(v)-\varphi(u)&\ge \sum_i(\varphi(x_{i} +\alpha(b_i-a_i))-\varphi(x_{j})) \\
&\ge\sum_i (F(b_{i}) - F(x_{i})) \ge \sum_i\int_{x_i}^{b_i} f_k(x)\,dx
>\sum_i\int_{a_i}^{b_i} f_k(x)\,dx>2^k,
\end{align*}
which is the desired contradiction.
\end{proof}

The final result of this section may seem to be a good start for showing the equivalence 
of Denjoy-Perron and \MC{\alpha} integrals for any $\alpha\ge 1$ by the methods
stemming from Denjoy's original definition \cite{AD} of his integral.  
These methods have been used, for example,
to show equivalence of the Denjoy and Perron integrals
(see the proof of the Theorem of Hake-Alexandroff-Looman at \cite[VIII(\S3)]{Saks}). 
However, the use of this idea to show equivalence of the \MC{\alpha} and Denjoy-Perron integrals 
would need a similar statement with $I$ replaced by
any closed subset of $I$. This turned out to be possible when $1\le\alpha\le2$,
and will lead us to the proof of Theorem~\ref{M.2}, but
for $\alpha>2$ we actually used this information to guess how the 
counterexamples required in Theorem~\ref{MC}\ref{MC.3} and~\ref{MC.4} may look like.

\begin{prp}\label{dense}
If, for some $\alpha>0$, 
$F$ is an indefinite \MC{\alpha} integral of $f$ on an interval $I$, there is a dense 
open subset $G$ of $I$ such that $F$ is absolutely continuous on every component
of $G$.  Consequently,  $F$ is an indefinite Lebesgue integral of $f$ on every 
component of~$G$.
\end{prp}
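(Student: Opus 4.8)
The plan is to produce the dense open set $G$ from a Baire category argument applied to a natural exhaustion of $I$ by closed sets on which the $\alpha$-control function $\varphi$ controls $F$ uniformly. Fix an $\alpha$-control function $\varphi$ for $(F,f)$. For each $x\in I$ and each $n\in\N$, the defining limit gives a $\delta>0$ such that $|F(y)-F(x)-f(x)(y-x)|\le\tfrac1n(\varphi(x+\alpha(y-x))-\varphi(x))$ for $|y-x|<\delta$; but for the category argument I want sets that do not involve the pointwise value $f(x)$, so instead I would set, for $n,m\in\N$,
\[
E_{n,m}:=\Bigl\{x\in I: |F(y)-F(x)|\le \varphi(x+\alpha(y-x))-\varphi(x) \text{ and } |f(x)|\le m \text{ whenever } |y-x|<\tfrac1n\Bigr\},
\]
noting that $|f(x)(y-x)|\le m|y-x|\le\varphi\bigl(x+\tfrac{1}{n}\bigr)-\varphi\bigl(x-\tfrac1n\bigr)$ is too crude; the clean route is to absorb $f(x)(y-x)$ into the estimate by using the $n=1$ instance of the definition together with local boundedness of $f$ (which holds a.e., and more usefully, $f$ is finite everywhere, so each $x$ lies in some $\{|f|\le m\}$). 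So I take $E_{n,m}:=\{x: |f(x)|\le m$, and $|F(y)-F(x)-f(x)(y-x)|\le \varphi(x+\alpha(y-x))-\varphi(x)$ for all $y$ with $|y-x|<\tfrac1n\}$. Then $I=\bigcup_{n,m}E_{n,m}$.

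Next I would fix any open subinterval $(c,d)\subset I$ and apply the Baire Category Theorem inside $[c,d]$: some $\overline{E_{n,m}}$ is somewhere dense, hence there is an open interval $J\subset(c,d)$ with $E_{n,m}\cap J$ dense in $J$. On such a $J$ I claim $F$ is absolutely continuous. This is the heart of the matter. Take finitely many non-overlapping intervals $[p_i,q_i]\subset J$ with $\sum(q_i-p_i)$ small. Since $E_{n,m}\cap J$ is dense, choose $x_i\in E_{n,m}\cap J$ with $x_i$ as close as we like to $p_i$ from below (or handle boundary cases separately); for $x_i$ close enough to $p_i$ and $q_i-x_i<\tfrac1n$ we get $|F(q_i)-F(x_i)|\le |f(x_i)|(q_i-x_i)+\varphi(x_i+\alpha(q_i-x_i))-\varphi(x_i)\le m(q_i-x_i)+(\varphi(x_i+\alpha(q_i-x_i))-\varphi(x_i))$, and similarly $|F(p_i)-F(x_i)|$ is small by continuity of $F$ (Proposition~\ref{T}) once $x_i\to p_i$. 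Summing and using that the points $x_i+\alpha(q_i-x_i)$ give rise to non-overlapping pieces of $\varphi$ — here is exactly where one needs $x_i$ close enough to $p_i$ that the intervals $[x_i,x_i+\alpha(q_i-x_i)]$ remain inside $J$ and essentially non-overlapping, which may require shrinking $J$ slightly and using $\alpha$-fold disjointness as in the proof of Theorem~\ref{M.1} — one bounds $\sum|F(q_i)-F(p_i)|$ by $m\sum(q_i-p_i)+\osc(\varphi,J)\cdot(\text{small}) + (\text{small})$. Since $\varphi$ is increasing and bounded on the slightly smaller interval, and since $\sum(q_i-p_i)$ is small, the total is small; this is the $\varepsilon$–$\delta$ statement of absolute continuity of $F$ on that smaller interval.

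Finally I would let $G$ be the union, over all rational subintervals $(c,d)$ of $I$, of one such interval $J\subset(c,d)$ on which $F$ is absolutely continuous; then $G$ is open, and it is dense because every subinterval of $I$ contains a rational subinterval $(c,d)$ and hence meets $G$. On each component of $G$, $F$ is absolutely continuous (absolute continuity on a base of overlapping subintervals propagates to their union, an interval), and by Proposition~\ref{T}\ref{T.3} we have $F'=f$ a.e.\ there, so the descriptive characterization of the Lebesgue integral gives that $F$ is an indefinite Lebesgue integral of $f$ on each component. The main obstacle is the absolute-continuity estimate on $J$: controlling $\sum|F(q_i)-F(p_i)|$ requires replacing the endpoints $p_i$ by nearby points $x_i\in E_{n,m}$ while keeping the resulting $\varphi$-increments non-overlapping, which is delicate precisely because $\alpha$ may exceed $1$ — the same difficulty that drives the whole paper — and is handled by choosing $x_i$ sufficiently close to $p_i$ and working on a compactly contained subinterval.
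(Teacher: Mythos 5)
Your Baire-category setup (your sets $E_{n,m}$ are exactly the paper's $Q_k$) and the final assembly of $G$ from intervals produced inside every rational subinterval are in line with the paper's proof; the gap is precisely at the step you yourself call the heart of the matter. First, the ``essential disjointness'' of the expanded intervals $[x_i,x_i+\alpha(q_i-x_i)]$ cannot be arranged by taking $x_i$ close to $p_i$: for $\alpha>1$ the expansion overshoots $q_i$ by $(\alpha-1)(q_i-x_i)$, and in the definition of absolute continuity the non-overlapping intervals $[p_i,q_i]$ are given adversarially (you do not get to construct them, unlike in Theorem~\ref{M.1}, where they come from Lemma~\ref{LC2} and the relevant factor satisfies $\alpha<\tau<1$). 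When the gaps between the $[p_i,q_i]$ are small the expanded intervals overlap; indeed a single point can be covered by arbitrarily many of them (take lengths growing geometrically towards that point), so $\sum_i\bigl(\varphi(x_i+\alpha(q_i-x_i))-\varphi(x_i)\bigr)$ is not even bounded by $\osc(\varphi,J)$. Second, and more fundamentally, even if disjointness held you would only get the constant bound $\osc(\varphi,J)$: since $\varphi$ is merely strictly increasing--- it may be singular, Cantor-like---a small total length $\sum_i(q_i-p_i)$ does not force the corresponding $\varphi$-increments to be small, so no choice of $\delta$ produces the $\varepsilon$ of absolute continuity. Your claimed bound ``$\osc(\varphi,J)\cdot(\text{small})$'' has no justification, and a direct proof of \AC{} of $F$ from the $\varphi$-control alone cannot work.

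What the control hypothesis does give directly is bounded variation, and that is how the paper argues: it proves the pointwise domination $|F(y)-F(x)|\le\gamma(y)-\gamma(x)$ for all $x,y$ in the interval found by Baire category, where $\gamma=2n(\varphi+kx)$ is a single increasing function and $n$ is the least integer exceeding $3\alpha$. This is done by splitting $[x,y]$ into $2n$ nearly equal pieces with division points in the dense set $Q_k$, using the right-sided control at the left endpoints on the first $n$ pieces and the left-sided control at the right endpoints on the last $n$ pieces, so that every control point $x_{i-1}+\alpha(x_i-x_{i-1})$, respectively $x_i-\alpha(x_i-x_{i-1})$, stays inside $[x,y]$; each piece then contributes at most $(\gamma(y)-\gamma(x))/2n$, and summation over disjoint intervals telescopes in $\gamma$, giving \VB{} on the interval. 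Absolute continuity is then obtained indirectly: \VB{} implies that $f=F'$ a.e.\ is Lebesgue integrable there (\cite[Lemma IV(7.4)]{Saks}), and $F$ is then identified with the indefinite Lebesgue integral of $f$, hence is \AC. To repair your argument you should aim first at a \VB-type pointwise domination (some version of the two-sided splitting trick is needed to tame the overshoot caused by $\alpha>1$) and then add this integration-theoretic step; note also that your closing remark that absolute continuity automatically propagates from an infinite family of overlapping subintervals to a whole component of $G$ is itself not immediate and needs the same kind of care.
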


\begin{proof}
It suffices to show that there is an interval $J\subset I$ on which
$F$ has bounded variation. Indeed, since 
$f=F'$ a.e., we infer from \cite[Lemma IV(7.4)]{Saks}
that $f$ is Lebesgue integrable on $(a,b)$ and from 
Theorem~\ref{BMT} that $F$ is an indefinite Lebesgue integral of $f$ on $J$.
The required set $G$ is then obtained by a repeated use of this for 
suitable subintervals of $I$.

Let $\varphi$ be an $\alpha$-control function
for $(F,f)$ and
for every $x\in I$ choose $\delta_x>0$ such that
$|F(y)-F(x) - f(x)(y-x)| \le |\varphi(x+\alpha(y-x))-\varphi(x)|$ when $|y-x|\le\delta_x$.
Let $Q_k:=\{x\in I: |f(x)|\le k,\,\delta_x>1/k\}$.
Then $Q=\bigcup_{k=1}^\infty Q_k$, and hence by the Baire Category Theorem
there is an open interval $(a,b)\subset I$ such that
$Q_k\cap (a,b)$ is dense in $(a,b)$.  
We diminish~$(a,b)$, if necessary, so that $b-a<1/k$ and show
\begin{equation}\label{PE.1}
|F(y)-F(x)| \le |\gamma(y)-\gamma(x)| \text{ for $x,y\in(a,b)$.}
\end{equation}
where $\gamma(x) = 2n(\varphi(x)+k x)$ and $n\in\N$ is 
the least integer greater than $3\alpha$.

To prove \eqref{PE.1},
suppose $x,y\in(a,b)$, $x<y$ and $\varepsilon>0$,
and find $0<\delta<(y-x)/6(1+\alpha)$ such that $|F(u)-F(v)|<\varepsilon/2$
when $v=x,y$ and $|u-v|<\delta$.
Use that $Q_k$ is dense in $(x,y)$ to find
$x< x_0<x_1<\dots<x_{2n}< y$ such that
$x_i\in Q_k$ and $|x_i-(x+i(y-x)/2n)|<\delta$.
If $1\le i\le n$, we have 
$x_{i-1}+\alpha(x_i-x_{i-1})<x+(i-1)(y-x)/2n +\delta +\alpha((y-x)/2n +\delta) <
x+(y-x)/2 +(y-x)/6 + (y-x)/6 + (y-x)/6=y$.
Since $x_{i-1}\in Q_k$  and $x_i-x_{i-1}<1/k<\delta_{x_i}$, this and monotonicity of $\varphi$ imply
\begin{multline*}
|F(x_i)-F(x_{i-1})|\le \varphi(x_{i-1}+\alpha(x_i-x_{i-1}))-\varphi(x_{i-1}) + k(x_i-x_{i-1})\\
\le \varphi(y)-\varphi(x) +k (y-x)\le (\gamma(y)-\gamma(x))/2n.
\end{multline*}
If $n< i\le 2n$, we similarly have
$x_i-\alpha(x_i-x_{i-1}) \ge x$, and hence
\[|F(x_i)-F(x_{i-1})|\le |\varphi(x_i-\alpha(x_i-x_{i-1}))-\varphi(x_{i})| + k(x_i-x_{i-1}) 
\le (\gamma(y)-\gamma(x))/2n.\]
Summing these inequalities gives
$|F(x_{2n})-F(x_0)|\le \gamma(y)-\gamma(x)$, from which we infer
\[|F(y)-F(x)|\le |F(x_{2n})-F(x_0)| +|F(x_{2n})-F(y)| + |F(x)-F(x_0)| \le 
\gamma(y)-\gamma(x)+2\varepsilon\]
and, since $\varepsilon>0$ is arbitrary,
$|F(y)-F(x)|\le |\gamma(y)-\gamma(x)|$. 

Having thus finished the proof of \eqref{PE.1}, we use
it together with the monotonicity of $\varphi$ to infer that 
\[\sum_i|F(u_i)-F(v_i)|\le \sum_i (\gamma(v_i)-\gamma(u_))\le 
\gamma(b)-\gamma(a)\]
for any mutually disjoint intervals $(u_i,v_i)$ with end-points in $(a,b)$.
Consequently, $F$~has bounded variation on $(a,b)$, which, as
explained at the beginning of this proof, implies the statement of the Proposition.
\end{proof}

\subsection{Coincidence of \MC{} and Denjoy-Perron integrals}

The coincidence the \MC{} and Denjoy-Perron integrals,
and so of the \MC{\alpha} and Denjoy-Perron integrals when $\alpha=1$ was proved
in \cite[Theorem~3]{BM} by Bendov\'a and Mal\'y. In their proof they used modern
definitions of the Denjoy-Perron integral. Here we point out that it
can be proved directly by using Perron's original definition of the Denjoy-Perron integral.
For this, we first
briefly recapitulate the definition of the indefinite Perron integral.
Full information may be found in \cite[VI(\S6) and VIII(\S3)]{Saks}.

\begin{dfn}\label{perron}
Let $f$, $F$ be functions defined on an open interval $I$.
Then $F$ is said to be an indefinite Perron integral of $F$ on $I$
if for every $\varepsilon>0$ and a compact interval $J\subset I$ there are function $U,V:I\to\R$ 
such that
$\underline D U(x)\ge f(x)\ge \overline D V(x)$ for every $x\in I$
and $|U(x)-F(x)|+|V(x)-F(x)|<\varepsilon$ for every $x\in J$.
Here
\[\underline D U(x) := \liminf_{y\to x} \frac{F(y)-F(x)}{y-x}
\text{ and }
\overline D U(x) := \limsup_{y\to x} \frac{F(y)-F(x)}{y-x}.\]
\end{dfn}

\begin{thm}[Bendov\'a and Mal\'y]\label{BMT}
$F$ is an indefinite \MC{} integral of $f$ on $I$ if and only it is its
indefinite Perron integral on $I$.
\end{thm}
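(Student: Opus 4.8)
The plan is to prove both halves of the equivalence directly from Definition~\ref{perron}. One direction builds major and minor functions out of a control function and is short; the converse, which has to extract a control function from a whole family of major and minor functions, is the substantial part.

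\emph{From \MC{} to Perron.} Let $\varphi$ be a control function for the indefinite \MC{} integral $F$ of $f$. Given $\varepsilon>0$ and a compact interval $J\subset I$, the function $\varphi$ is bounded on $J$, so we may choose $c_0>0$ with $2c_0\sup_J|\varphi|<\varepsilon$ and put $U:=F+c_0\varphi$ and $V:=F-c_0\varphi$. Then $U,V\colon I\to\R$ and $|U-F|+|V-F|<\varepsilon$ on $J$. To verify $\underline D U\ge f\ge\overline D V$ on $I$, fix $x$ and use the defining limit of the \MC{} integral with $\eta=c_0$ to get $\delta>0$ with $|F(y)-F(x)-f(x)(y-x)|\le c_0|\varphi(y)-\varphi(x)|$ whenever $0<|y-x|<\delta$. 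Since $\varphi$ is strictly increasing, $(\varphi(y)-\varphi(x))/(y-x)>0$, so for such $y$
\[\frac{U(y)-U(x)}{y-x}=f(x)+\frac{F(y)-F(x)-f(x)(y-x)}{y-x}+c_0\,\frac{\varphi(y)-\varphi(x)}{y-x}\ge f(x),\]
giving $\underline D U(x)\ge f(x)$; the bound $\overline D V(x)\le f(x)$ is symmetric. Hence $F$ satisfies Definition~\ref{perron}.

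\emph{From Perron to \MC{}.} Now let $F$ be an indefinite Perron integral of $f$. Fix compact intervals $J_1\subset J_2\subset\cdots$ with union $I$, and for each $k$ apply Definition~\ref{perron} with $\varepsilon=2^{-k}$ and $J=J_k$; by the standard theory of the Perron integral \cite[VI(\S6)]{Saks} the resulting major and minor functions $U_k,V_k$ may be taken continuous. Then $\underline D(U_k-V_k)\ge\underline D U_k-\overline D V_k\ge 0$ everywhere, so $U_k-V_k$ is increasing on $I$; comparing with the integral over each $[x,y]\subset J_k$ gives $V_k(y)-V_k(x)\le F(y)-F(x)\le U_k(y)-U_k(x)$. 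Since $|U_k-V_k|<2^{1-k}$ on $J_k$, we let $\psi_k$ be the increasing, nonnegative function obtained by retracting $I$ onto $J_k$, composing with $U_k-V_k$, and subtracting a constant, so that $0\le\psi_k<2^{1-k}$ on $I$ and $\psi_k$ agrees with $U_k-V_k$ up to an additive constant on $J_k$. Then $\varphi(x):=x+\sum_{k=1}^\infty 2^{k/2}\psi_k(x)$ is a well-defined strictly increasing function on $I$. To see it controls $(F,f)$, fix $x\in I$ and $\eta>0$ and take $k$ with $x$ in the interior of $J_k$. For $y$ close enough to $x$, the inequalities $\underline D U_k(x)\ge f(x)$ and $\overline D V_k(x)\le f(x)$ used inside the sandwich $V_k(y)-V_k(x)\le F(y)-F(x)\le U_k(y)-U_k(x)$ yield
\[|F(y)-F(x)-f(x)(y-x)|\le|\psi_k(y)-\psi_k(x)|+\eta|y-x|,\]
while $\varphi(y)-\varphi(x)\ge\max\{|y-x|,\,2^{k/2}|\psi_k(y)-\psi_k(x)|\}$ for all $y$, so the quotient defining the \MC{} integral is at most $2^{-k/2}+\eta$ near $x$. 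Letting $\eta\to 0$ and then $k\to\infty$ shows this quotient tends to $0$, so $F$ is an indefinite \MC{} integral of $f$.

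The delicate point is this second implication: a single strictly increasing function must simultaneously absorb the errors $F(y)-F(x)-f(x)(y-x)$ at all points $x$ and all nearby $y$. The mechanism is that the difference of a major and a minor function is increasing and, after truncation to $J_k$, has oscillation below $2^{1-k}$, so the weighted series $\sum_k 2^{k/2}\psi_k$ converges to an increasing function whose local behaviour near any point is, for all large $k$, governed by the single term $\psi_k$; adding the identity keeps the sum strictly increasing and dominates the spurious linear term $\eta|y-x|$, while the divergence of the weights $2^{k/2}$ forces the quotient to zero.
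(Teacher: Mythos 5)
Your proof is correct and follows essentially the same route as the paper: in the easy direction the major/minor functions $F\pm c_0\varphi$, and in the converse a control function built as $x$ plus a weighted sum of the increasing differences $U_k-V_k$ (you truncate to $J_k$ and weight by $2^{k/2}$, where the paper simply uses $x+\sum_k k(U_k-V_k)$). The one step you delegate to ``standard theory'' --- the sandwich $V_k(y)-V_k(x)\le F(y)-F(x)\le U_k(y)-U_k(x)$, i.e.\ that $U_k-F$ and $F-V_k$ are increasing --- is obtained in the paper directly from Definition~\ref{perron} by letting $j\to\infty$ in the increasing functions $U_k-V_j$ (using $|V_j-F|<2^{-j}$ on $J_j$); with that one line made explicit your argument is complete.
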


\begin{proof}
Write $I$ as a union of an increasing sequence of compact intervals $J_k=[a_k,b_k]$.

Let $F$ be an indefinite \MC{} integral of $f$ on $I$ and 
$\varphi$ be a control function for $(F,f)$. Given $\varepsilon>0$,
we let $U_\varepsilon:=F+\varepsilon\varphi$ and $V_\varepsilon:=F-\varepsilon\varphi$.
For every $x\in I$ there is $\delta>0$ such that
$|y-x|<\delta$ implies
$|F(y)-F(x)-f(x)(y-x)|\le \varepsilon |\varphi(y)-\varphi(x)|$.
Rearranging this inequality gives
$(U_\varepsilon(y)-U_\varepsilon(x))/(y-x) \ge f(x)$;
hence $\underline{D} U_\varepsilon(x)\ge f(x)$.
A symmetric argument shows that $\overline{D} V_\varepsilon(x) \le f(x)$.
Finally, on each $J_k$ we have 
$|U(x)-F(x)|+|V(x)-F(x)|\le 2\varepsilon\max(|\varphi(a_k)|,|\varphi(b_k)|)$.
Hence $F$ is an indefinite Perron integral of $f$ on~$I$.

Assuming $F$ is an indefinite Perron integral of $f$ on $I$, for every $k\in\N$
there are functions $U_k, V_k$ 
such that $\underline D U_k\ge f\ge \overline D V_k$ on $I$ and 
$|U_k-V_k|\le 2^{-k}$ on $J_k$. Since $\underline D (U_k-V_j)\ge 0$,
$U_k-V_j$ are increasing (see, for example, \cite[Theorem VI(3.2)]{Saks}),
and so are also $U_k-F$ and $F-V_j$. Letting 
$\varphi(x):= x+\sum_{k=1}^\infty k (U_k(x)-V_k(x))$, which is well-defined
since each $x\in I$ belongs to all but finitely many $J_k$,
we show that $\varphi$ is the required control function. 
Clearly, it is strictly increasing.
For every $x\in I$ and $k\in\N$ there is
$\delta>0$ such that  $0<|y-x|<\delta$ implies
\[\frac{U_k(y)-U_k(x)}{y-x}\ge f(x)-1/k\ \text{ and }\
\frac{V_k(y)-V_k(x)}{y-x}\le f(x)+1/k.\]
Hence
\begin{align*}
\frac{F(y)-F(x)}{y-x}-f(x)&\le \frac{U_k(y)-U_k(x)}{y-x}-\frac{V_k(y)-V_k(x)}{y-x}+\frac1k\le 
\frac{\varphi(y)-\varphi(x)}{k(y-x)}\\
\intertext{and a symmetric argument gives}
\frac{F(y)-F(x)}{y-x}-f(x)&\ge \frac{V_k(y)-V_k(x)}{y-x}-\frac{U_k(y)-U_k(x)}{y-x}-\frac1k\ge 
-\frac{\varphi(y)-\varphi(x)}{k(y-x)},
\end{align*}
which shows that $\varphi$ is a control function for $(F,f)$.
\end{proof}

\subsection{Relation between \MC{\alpha} and Denjoy-Perron integrals}

Our main argument here shows that every \MC{2}-integrable
function is Denjoy-Perron integrable, which together with the
result of  Bendov\'a and Mal\'y and Proposition~\ref{T} immediately implies that  
for $1\le\alpha\le 2$ the \MC{\alpha} and Denjoy-Perron integrals coincide.

\begin{thm}\label{M.2}
For any $1\le\alpha\le2$, the \MC{\alpha} integral on any interval $I$ 
coincides with the Denjoy-Perron integral.
\end{thm}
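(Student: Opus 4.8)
I would prove the asserted coincidence as two separate inclusions. That the Denjoy–Perron integral is contained in the \MC{\alpha} integral holds already for every $\alpha\ge1$: if $f$ is Denjoy–Perron integrable with indefinite integral $F$, then $F$ is an indefinite Perron integral of $f$, hence an indefinite \MC{} integral of $f$ by Theorem~\ref{BMT}, and hence an indefinite \MC{\alpha} integral of $f$ for every $\alpha\ge1$ by Proposition~\ref{T}. For the opposite inclusion it suffices, again by Proposition~\ref{T}, to handle the single case $\alpha=2$, since an \MC{\alpha} integrable function with $1\le\alpha\le2$ is also \MC{2} integrable; so the plan is to fix an indefinite \MC{2} integral $F$ of $f$ on $I$ and show it is an indefinite Denjoy–Perron integral of $f$. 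Since $F$ is continuous and $F'=f$ almost everywhere by Proposition~\ref{T}, the descriptive characterisation of the Denjoy–Perron integral reduces this to proving that $F$ is \ACGs\ on $I$.

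The core is a version of Proposition~\ref{dense} with the ambient interval replaced by an arbitrary nonempty closed subset $K$ of $I$ — as anticipated in the paragraph preceding Proposition~\ref{dense} — namely: there is an open interval $J$ with $K\cap J\neq\emptyset$ such that $F$ is \ACs\ on $K\cap J$. I expect proving this to be the main obstacle. The plan is to run the argument of Proposition~\ref{dense} with $K$ in place of the interval: with $\varphi$ a $2$-control function for $(F,f)$ and $\delta_x$ chosen as there, put $Q_k:=\{x\in K:|f(x)|\le k,\ \delta_x>1/k\}$, use the Baire Category Theorem to obtain $k\in\N$ and an open interval $J=(a,b)$ with $b-a<1/k$ in which $Q_k$ is dense in $K$, and then bound $\osc\bigl(F,[p,q]\bigr)$ for $p<q$ in $K\cap J$ by inserting points of $Q_k$ along the trace of $K$ inside $[p,q]$, applying the control inequality forward from the left-hand points and backward from the right-hand ones, while on the complementary open intervals of $[p,q]\setminus K$ one uses that $F$ is there already an indefinite Lebesgue integral of $f$ by Proposition~\ref{dense}; summing over finitely many non-overlapping intervals with endpoints in $K\cap J$ then gives the \ACs\ estimate. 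The hypothesis $\alpha\le2$ is used precisely at this oscillation estimate: the forward reach $\alpha(q-p)$ from $p$ and the backward reach $\alpha(q-p)$ from $q$ together cover $[p,q]$ with bounded multiplicity only when $\alpha\le2$, which is what keeps the associated increments of $\varphi$ under control; for $\alpha>2$ the relative statement fails, and that failure is what suggests the counterexamples of Theorem~\ref{MC}\ref{MC.3} and~\ref{MC.4}.

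With the relative statement in hand, I would globalise it exactly as in Saks' proof of the Hake–Alexandroff–Looman theorem \cite[VIII(\S3)]{Saks}. Let $\mathcal O$ be the set of points of $I$ having a neighbourhood on whose closure (intersected with $I$) the function $F$ is \ACGs; it is open. If $\mathcal O\neq I$, then $K:=I\setminus\mathcal O$ is nonempty, and it has no isolated points — an isolated point of $K$ would have a punctured neighbourhood contained in $\mathcal O$ and would therefore, by continuity of $F$, itself lie in $\mathcal O$, which is impossible — so $K$ is perfect. Applying the relative statement to $K$ produces an open interval $J$ meeting $K$ with $F$ \ACs\ on $K\cap J$; each component of the open set $J\setminus K$ lies in $\mathcal O$, so $F$ is \ACGs\ on it and, by continuity, on its closure, whence $F$ is \ACGs\ on the whole of $J$. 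But then $K\cap J\subseteq\mathcal O$, contradicting $K\cap\mathcal O=\emptyset$. Hence $\mathcal O=I$, and covering $I$ by countably many of these neighbourhoods shows that $F$ is \ACGs\ on $I$, which completes the proof.
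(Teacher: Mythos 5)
Your overall architecture -- reduce to $\alpha=2$, show the indefinite \MC{2} integral $F$ is \ACGs\ via a Baire/portion argument over the closed exceptional set, then use the descriptive characterisation -- matches the paper, and the easy inclusion and the reduction are fine. The genuine gap is in the key relative statement. You claim that $F$ is \ACs\ on some portion $K\cap J$, and your globalisation really needs \ACs\ (only a countable union of sets on which $F$ is \ACs\ yields \ACGs). But the estimates your sketch produces bound $\osc(F,[p,q])$ by a constant times $\varphi(q)-\varphi(p)$ plus $k(q-p)$, where $\varphi$ is merely an increasing function: since $\varphi$ may be singular (it may increase essentially only across $K$), smallness of $\sum_i(q_i-p_i)$ gives no smallness of $\sum_i(\varphi(q_i)-\varphi(p_i))$, so this mechanism can only ever deliver a \VBs-type bound on the portion, never \ACs. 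This is precisely why the paper does not attempt \ACs\ there: it proves only the bounded-variation estimate \eqref{PE.2} (via the midpoint trick, which is where $\alpha\le2$ enters), passes to \VBs\ on the closure by continuity, and then invokes the classical machinery of \cite{Saks} -- Lemmas VIII(2.1) and IV(7.4) to get that $f$ is Lebesgue integrable on the closed set, the convergence of the series of oscillations of $F$ over the complementary intervals (again obtained from the $2$-control function applied at the two endpoints of each gap), and finally the Hake--Alexandroff--Looman-type gluing lemma to conclude Denjoy-Perron integrability on the subinterval, whence the contradiction. Without a substitute for these ingredients your plan does not close; conversely, once they are used, the direct \ACs\ claim becomes unnecessary.

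A secondary error compounds this: on the components of $[p,q]\setminus K$ you propose to use that $F$ is ``already an indefinite Lebesgue integral of $f$ by Proposition~\ref{dense}''. That proposition only provides some dense open set $G$ on whose components $F$ is absolutely continuous; the components of $J\setminus K$ (with $K$ the complement of your set of locally good points) need not be contained in $G$, and on them you know only that $F$ is locally \ACGs, i.e.\ locally Denjoy-Perron, not Lebesgue. Moreover, even where $F$ is Lebesgue on each gap, $\sum\int|f|$ over the infinitely many gaps is in general infinite, so Lebesgue integrability on the gaps cannot control the oscillation sums you need. The correct treatment of the gaps is again via the control inequality from their endpoints, exactly as in part \ref{P2.b} of the paper's proof, and this is a second place where $\alpha\le 2$ is used.
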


\begin{proof}
By Theorem~\ref{BMT} and Proposition~\ref{T} it suffices to show that
every \MC{2} integrable function is Denjoy-Perron integrable.
So suppose $F$ is an indefinite \MC{2} integral of $f$
on~$I$. Denote by $G$ the union of those open subintervals of $I$ on 
which $F$ is \ACGs. By the Lindel\"of property of the real line,
$G$ is the union of a countable family of such subintervals, 
and so $F$ is \ACGs\ on $G$. Since $F'=f$ almost everywhere,
$F$ is an indefinite Denjoy-Perron integral of $f$
on each component of $G$.
This means that if $G=(a,b)$, we are done; so assume
$Q:=(a,b)\setminus G\ne\emptyset$. 
Notice that $Q$ has no isolated points by \cite[Lemma VIII(3.1)]{Saks}.
We show that there is
an interval $[a,b]\subset I$ such that $Q\cap(a,b)\ne\emptyset$
and
\begin{enumerate}[label=(\alph*)]
\item\label{P2.a}
$f$ is Lebesgue integrable on $Q$;
\item\label{P2.b}
the series of the oscillations of $F$ on the components
of $(a,b)\setminus Q$ converges.
\end{enumerate}
By Lemma~3.4 in \cite[Chapter 7]{Saks} this will imply that
$F$ is an indefinite Denjoy-Perron integral of $f$ on $(a,b)$. But then
$(a,b)\subset G$, contradicting $Q\cap(a,b)\ne\emptyset$.

To find the interval $(a,b)$, let $\varphi$ be a 2-control function
for $(F,f)$ and
for every $x\in I$ choose $\delta_x>0$ such that
$|F(y)-F(x) - f(x)(y-x)| \le |\varphi(x+2(y-x))-\varphi(x)|$ when $|y-x|\le\delta_x$.
Let $Q_k:=\{x\in Q: |f(x)|\le k,\,\delta_x>1/k\}$.
Then $Q=\bigcup_{k=1}^\infty Q_k$, and hence by the Baire Category Theorem
there is an open interval $(a,b)$ such that $M:=Q\cap (a,b)\ne\emptyset$ and 
$Q_k\cap M$ is dense in $M$.  
Since $Q$ has no isolated points, we may diminish~$(a,b)$, if necessary,
to guarantee $a,b\in M$ and $|b-a|<1/k$.
We show that
\begin{equation}\label{PE.2}
|F(y)-F(x)| \le 2|\varphi(y)-\varphi(x)|+2k|y-x| \text{ for $x,y\in M$.}
\end{equation}
To prove this we use
 that $|y-x|<\delta_x$ to infer that
 \[|F((x+y)/2)-F(x)| \le |\varphi(y)-\varphi(x)|+|f(x)||y-x|\le |\varphi(y)-\varphi(x)|+k|y-x|.\]
 Similarly we have 
 $|F((x+y)/2)-F(y)| \le |\varphi(y)-\varphi(x)|+k|y-x|$.
 Hence 
\[|F(y)-F(x)|\le |F((x+y)/2)-F(x)|+|F((x+y)/2)-F(y)| \le 
 2|\varphi(y)-\varphi(x)|+2k|y-x|,\] 
as claimed.
 
Clearly, monotonicity of $\varphi$ and  \eqref{PE.2} show that 
\[\sum_i|F(u_i)-F(v_i)|\le \sum_i (2|\varphi(v_i)-\varphi(u_i)|+2k|v_i-u_i|)\le 
2|\varphi(b)-\varphi(a)|+2k|b-a|\]
for any mutually disjoint intervals $(u_i,v_i)$ with end-points in $M$.
Consequently, $F$ has bounded variation on $M$ and, since it is
continuous, also on $Q$. (See \cite[VII(\S4)]{Saks}). Since 
$f=F'$ a.e., we infer~\ref{P2.a} from \cite[Lemma VIII(2.1)] {Saks}
and \cite[Lemma IV(7.4)]{Saks}.

For \ref{P2.b}, consider any component $(u,v)$ of $(a,b)\setminus Q$.
If $x\in[u,(u+v)/2]$, then 
\[|F(x)-F(u)|\le|\varphi(u+2(x-u))-\varphi(u)| \le \varphi(v)-\varphi(u),\]
and if $x\in[(u+v)/2,v]$, then 
\[|F(x)-F(v)|\le|\varphi(v+2(x-v))-\varphi(v)| \le \varphi(v)-\varphi(u).\]
Using also \eqref{PE.1}, we get
\[|F(x)-F(u)|\le 3(\varphi(v)-\varphi(u))+2k(y-x),\] and hence the
oscillation of $F$ on $(u,v)$ is at most
$6(\varphi(v)-\varphi(u))+4k(y-x)$. The sum of these oscillations
over the components of $(a,b)\setminus Q$ is therefore at most
\[6(\varphi(b)-\varphi(a))+4k(b-a),\] 
showing that \ref{P2.b} holds and so proving the Theorem.
\end{proof}

\subsection{Preliminaries to constructions for $\alpha>2$}\label{prlm}

As we already said in connection with Proposition~\ref{dense}, 
the examples we have to construct to show
the cases of Theorem~\ref{MC} when $\alpha>1$ need a closed, necessarily nowhere dense,
set on which the function we construct is not \ACGs, respectively \ACG. The (ternary) Cantor set is a good candidate,
especially when we recall that the corresponding Cantor function
has a very quick increase when passing through the Cantor set, thereby
providing a good choice for a control function. We will actually use the ternary Cantor set
to prove Theorem~\ref{MC}\ref{MC.3}, but for the proof of Theorem~\ref{MC}\ref{MC.4}
we need a bit more room that we gain by using one of the Cantor type sets with base~5. We therefore 
fix notation for the construction of these sets with any odd base $q\ge 3$, although we
will use it only for $q=3$ and $q=5$.

The Cantor type sets we use are defined in a standard way.
Let $q=2m+1$ be an odd integer. We recursively
define collections of closed intervals $\I_k$, $k=0,1,2,\dots$ and
collections of open intervals $\J_k$, $k=1,2,\dots$ as follows.

We let $\I_0:=\{[0,1]\}$ and, whenever $\I_{k-1}$ has been defined,
and $[u,v]\in\I_{k-1}$, we put into $\I_k$ the intervals
$[u+j (v-u)/q,u+(j+1)(v-u)/q]$ where $0\le j\le q-1$ is even, and into $\J_k$
the intervals $(u+j (v-u)/q,u+(j+1)(v-u)/q)$ where $0\le j\le q-1$ is odd.

The (base $q$) Cantor set is then defined as $C:=\bigcap_{k=0}^\infty C_k$
where $C_k$ is the union of intervals from $\I$. We will use the following straightforward facts.
\begin{itemize}
\item $C_0\supset C_1\supset\dots$
\item$\I_k$ is the set of (connected) components of~$C_k$.
\item $\J_k$ is the set of components of $C_k\setminus C_{k-1}$.
\item The set of components of $[0,1]\setminus C$ is
$\J:=\bigcup_{k=1}^\infty\J_k$, where the union is disjoint.
\item
For every $(u,v)\in\J_k$, $u,v\in C$
and both $u,v$ are end-points of intervals from $\I_k$.
\item $\I_k$ consists of $(m+1)^k$ mutually disjoint closed intervals of length $q^{-k}$.
\item $\J_k$ consists of $m(m+1)^{k-1}$ mutually disjoint open intervals of length $q^{-k}$.
\item If $k>p$, an interval from $\I_p$ contains $m(m+1)^{k-p-1}$ intervals from $\J_k$.
\end{itemize}

We will also use the corresponding Cantor function $\psi:\R\to[0,1]$, which is 
characterized by being continuous,
increasing, constant on each component of $\R\setminus C$, 
and, for each $k$, mapping each interval from $\I_k$ onto an interval of length
$(m+1)^{-k}$. 
To define it, we may, for example, let $\psi_k(x):=(m+1)^{-k}q^k |(-\infty,x)\cap C_k|$,
observe that $|\psi_k-\psi_{k-1}|\le(m+1)^{-k}$ and define $\psi:=\lim_{k\to\infty}\psi_k$.

\subsection{\MC{\alpha} integrabilites differ for $\alpha\ge 2$}

In this proof we will use the ternary Cantor set $C$, the collections of intervals
$\J_k$ and~$\J$ and the Cantor function~$\psi$,
as described in Section~\ref{prlm} when we take $q=3$.

\begin{lem}\label{C}
There are $Q_J>0$, $J\in\J$, such that
\begin{enumerate}
\item\label{C.c}
$\lim_{k\to\infty} \max\{Q_j: J\in\J_k\} =0$;
\item\label{C.a}
$\sum_{J\in\J,\,J\subset I}  Q_J=\infty$
whenever $I\subset(0,1)$ is an open interval meeting $C$.
\item\label{C.b}
for every $\eta>0$, 
\begin{equation}\label{EC.b}
Q_J \le \eta\min(\psi(b+\eta(b-a))-\psi(b),\psi(a)-\psi(a-\eta(b-a)))
\end{equation}
holds for all but finitely many $J=(a,b)\in\J$;
\end{enumerate} 
\end{lem}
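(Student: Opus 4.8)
The plan is to define $Q_J$ explicitly in terms of the length of $J$ and the position of $J$ in the Cantor construction, and then verify the three properties by direct computation. Recall that $\J_k$ consists of $2^{k-1}$ intervals of length $3^{-k}$, that an interval from $\I_p$ contains $2^{k-p-1}$ intervals of $\J_k$ when $k>p$, and that the Cantor function $\psi$ increases by $2^{-k}$ across each interval of $\I_k$. The natural first attempt is $Q_J := c_k$ for $J\in\J_k$, where $c_k>0$ is a sequence to be chosen. Property~\ref{C.c} then just says $c_k\to 0$. For property~\ref{C.a}: if an open interval $I$ meets $C$, then $I$ contains some interval $[u,v]\in\I_p$ for $p$ large enough (since the components of $C_p$ shrink to points of $C$), and such $[u,v]$ contains $2^{k-p-1}$ intervals of $\J_k$ for every $k>p$; hence $\sum_{J\subset I}Q_J\ge\sum_{k>p}2^{k-p-1}c_k$, so \ref{C.a} holds provided $\sum_k 2^k c_k=\infty$.

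The real constraint comes from \ref{C.b}. For $J=(a,b)\in\J_k$ we have $b-a=3^{-k}$, and I need to estimate $\psi(b+\eta(b-a))-\psi(b)$ from below. The point $b$ is the right endpoint of some interval $[u,b]\in\I_k$; the interval of $\I_k$ immediately to the right of $J$, call it $[b,b']$, also has length $3^{-k}$, and $\psi$ increases by $2^{-k}$ across it. Since $\eta(b-a)=\eta 3^{-k}$, for $k$ large the point $b+\eta 3^{-k}$ lies inside $[b,b']$, and by the self-similar structure the increase $\psi(b+\eta 3^{-k})-\psi(b)$ is at least $2^{-k}$ times the $\psi$-increase of the base Cantor function over an initial segment of relative length $\eta$ — which is a fixed strictly positive quantity $\rho(\eta)>0$ depending only on $\eta$ (one can take $\rho(\eta)=2^{-j}$ where $3^{-j}\le\eta$, i.e.\ $\rho(\eta)$ comparable to a power of $\eta$). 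The same bound holds on the left side by symmetry. Thus $\min(\cdots)\ge \rho(\eta)\,2^{-k}$ for all $J\in\J_k$ once $k\ge k_0(\eta)$, and \eqref{EC.b} reduces to requiring $c_k\le \eta\rho(\eta)2^{-k}$ for all but finitely many $k$, for every fixed $\eta>0$.

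So I must choose $c_k>0$ with $\sum_k 2^k c_k=\infty$, $c_k\to0$, and $2^k c_k \le \eta\rho(\eta)$ eventually for every $\eta>0$; since $\eta\rho(\eta)\to0$ as $\eta\to0$, this last condition forces $2^k c_k\to 0$. The combination ``$\sum 2^k c_k=\infty$ but $2^k c_k\to 0$'' is achievable, e.g.\ $2^k c_k = 1/k$, i.e.\ $c_k := 1/(k2^k)$: then $\sum 2^k c_k=\sum 1/k=\infty$, and for any $\eta>0$ we have $2^kc_k=1/k<\eta\rho(\eta)$ once $k>1/(\eta\rho(\eta))$, and $c_k\to0$ trivially. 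The main obstacle is thus not any single step but pinning down the uniform lower bound for the one-sided $\psi$-increments $\psi(b+\eta(b-a))-\psi(b)$ in terms of $\eta$ alone, using the exact self-similarity of the ternary construction; once that estimate is in hand, the choice $Q_J=1/(k2^k)$ for $J\in\J_k$ settles all three properties by elementary series manipulations.
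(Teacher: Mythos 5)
Your proof is correct and follows essentially the same route as the paper: constant weights on each generation $\J_k$, the counting bound ($2^{k-p-1}$ intervals of $\J_k$ inside a component of $C_p$) for the divergence in \ref{C.a}, and the self-similarity of $\psi$ (an $\I_{k+j}$-interval of length $3^{-k-j}$ abutting each endpoint of $J$, across which $\psi$ rises by $2^{-k-j}$, with $3^{-j}\le\eta$) for the increment bound in \ref{C.b}. The only real difference is your cleaner explicit choice $Q_J=1/(k2^k)$ for $J\in\J_k$, where the paper takes $Q_J=2^{-k-2l}$ on blocks $4^{l-1}\le k<4^l$ — both resolve the same tension you identify ($2^kQ_J\to0$ yet the sums in \ref{C.a} diverge) — and your passing remark that $b$ is a right endpoint of an $\I_k$-interval is a harmless slip, since the estimate actually uses the interval $[b,b']\in\I_k$ to the right of $J$, as you do.
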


\begin{proof}
We let $k_l := 4^l$,
define $Q_J:=2^{-k-2l}$ when $J\in\J_k$ and $k_{l-1}\le k<k_{l}$
and notice that \ref{C.c} holds.

For any open interval $J$ meeting $C$ there is $p\in\N$ such that
$J$ contains an interval from $\I_p$.
It follows that for $k>p$, $J$ contains at least $2^{k-p-1}$ intervals from~$\J_k$.
Choose $m\in\N$ such that $4^{m-1}>p$. Then,
if $l\ge m$ and $k_{l-1}\le k<k_l$, 
the sum of $Q_J$ over those $J\in\J_k$ that are contained in $J$
is at least $2^{-p-2l-1}$. Hence
\[\sum_{J\in\J,\,J\subset J}  Q_J
\ge \sum_{l=m}^\infty\sum_{k=k_{l-1}}^{k_l-1} 2^{-p-2l-1}
= \sum_{l=m}^\infty 2^{-p-2l-1}(4^l-4^{l-1}) \ge \sum_{l=m}^\infty 2^{-p-3}=
\infty.\]

To prove \ref{C.b}, suppose $\eta>0$ and choose $n\in\N$ such that $2^{-n}<\eta$.
Whenever $l\ge n$, $k_{l-1}\le k<k_l$ and $J=(a,b)\in\J_k$,
we notice that  $b+\eta(b-a)\ge b+2^{-n} 3^{-k} \ge b+3^{-k-n}$, and so that
$b$ is the left-end point of an interval from $\I_{k+n}$ of length $3^{-k-n}$
that is mapped by $\psi$ onto an interval of length $2^{-k-n}$. Hence
\[\eta(\psi(b+\eta(b-a))-\psi(b)) \ge 2^{-k-2n}\ge Q_J.\] 
A symmetric argument shows $\eta(\psi(a)-\psi(a-\eta(b-a))) \ge Q_J$. 
Hence \eqref{EC.b} holds for
all $J\in\J_k$ when $k\ge n$,
hence for all $J\in\J$ except finitely many.
\end{proof}

\begin{thm}\label{M.3}
For any interval $I$ and $\alpha\ge 2$ there are functions $f,F:\R\to\R$ such that
\begin{enumerate}
\item 
$F$ is the Denjoy-Khintchine indefinite integral of $f$ on $\R$;
\item
$F$ is the \MC{\beta} indefinite integral of $f$ on $\R$ for every $\beta>\alpha$;
\item
$f$ is not \MC{\alpha} integrable on $I$.
\end{enumerate}
\end{thm}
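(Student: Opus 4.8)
The plan is to build $F$ directly out of the Cantor function $\psi$ and the weights $Q_J$ provided by Lemma~\ref{C}, placing on each component $J=(a,b)\in\J$ of $[0,1]\setminus C$ a bump of height roughly $Q_J$ that is supported near the right-hand end of the left half of $J$ (this asymmetry is what makes $\alpha=2$ the critical value). Concretely, I would let $F$ be constant outside $[0,1]$, on each $J=(a,b)$ define $F$ to rise from $0$ up to $\pm Q_J$ (the sign alternating, or chosen so that no net increase accumulates) on the subinterval $[a,\,a+\tfrac12(b-a)]$ and then return to $0$ on $[a+\tfrac12(b-a),\,b]$, and on the Cantor set itself set $F\equiv0$; then put $f:=F'$ wherever this exists and $f:=0$ on $C$. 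Since $F$ is continuous (the bumps are uniformly small on $\J_k$ by Lemma~\ref{C}\ref{C.c}, and $|F|\le\max\{Q_J:J\in\J_k\}\to0$ near $C$), is piecewise linear on $[0,1]\setminus C$, and is identically zero on $C$, it is plainly \ACG{} on $\R$ (countable union of the intervals $\overline J$ together with the null set $C$), and $F'_{\mathrm{ap}}=f$ a.e., giving~(i). For~(iii), I would run exactly the argument of Theorem~\ref{M.1}: if $\varphi$ were an $\alpha$-control function for $(F,f)$ on an interval $I$ meeting $C$, then at points $x\in C$ we would have $|F(x+h)-F(x)|=|F(x+h)|\le\varphi(x+\alpha h)-\varphi(x)$ for small $h$; a Baire-category argument on $\{x\in C: \delta_x>1/l\}$ produces an interval $J'\subset I$ in which such points are dense, and, choosing a left end-point $x$ just to the left of the left end-point $a_i$ of many $J_i\in\J$ with $J_i\subset J'$, the interval $[x,x+\alpha(b_i-x)]$ with $\alpha\ge2$ covers the bump of $J_i$, so $\varphi$ must increase by at least $\sim Q_{J_i}$ across disjoint pieces; summing and using Lemma~\ref{C}\ref{C.a} (divergence of $\sum Q_J$ over $J\subset J'$) contradicts monotonicity and boundedness of $\varphi$ on $J'$.

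The substantive part is~(ii): exhibiting for each $\beta>\alpha$ a $\beta$-control function $\varphi_\beta$. The natural candidate is $\varphi_\beta(x):=x+c\,\psi(x)$ for a constant $c=c(\beta)$, possibly with an extra increasing correction term that is linear on each $J$. One must verify
\[
\lim_{y\to x}\frac{F(y)-F(x)-f(x)(y-x)}{\varphi_\beta(x+\beta(y-x))-\varphi_\beta(x)}=0
\]
at every $x$. For $x$ in the interior of some $J$ this is immediate since $F'(x)=f(x)$ there and $\varphi_\beta$ is bi-Lipschitz near $x$ (so the denominator is $\gtrsim|y-x|$). The only delicate points are $x\in C$, where $f(x)=0$ and $F(x)=0$, so one needs
\[
|F(x+h)|=o\!\bigl(\varphi_\beta(x+\beta h)-\varphi_\beta(x)\bigr)\qquad(h\to0),
\]
and here the point of taking $\beta>\alpha\ge2$ enters: a bump of $J=(a,b)$ that $F(x+h)$ can "see" (i.e.\ with $x\le a$ and the bump meeting $x+\beta h$, hence roughly $a+\tfrac12(b-a)\le x+\beta h$) forces $x+h$ to be close enough to $a$ that $x+\beta h$ already overshoots $b$ by a definite fraction of $b-a$ — precisely because $\beta/2>1$ — so $\varphi_\beta(x+\beta h)-\varphi_\beta(x)\ge c\,(\psi(b+\eta(b-a))-\psi(b))$ for some fixed $\eta>0$, and Lemma~\ref{C}\ref{C.b} makes this $\ge Q_J/\eta'\gg Q_J\ge|F(x+h)|$ for all but finitely many $J$. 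The finitely many exceptional $J$ are handled by noting that $x\in C$ is not an interior point of any $J$, so for $h$ small the bumps seen by $x+\beta h$ all come from $J$'s of arbitrarily high generation. This is the step I expect to be the main obstacle: getting the quantified geometric comparison between "the largest bump visible from $x+h$" and "the jump of $\psi$ that $\varphi_\beta$ accrues on $(x,x+\beta h)$" uniform over $x\in C$, with the case $\beta=\alpha$ (where the visible bump's right edge only just reaches $b$ and $\psi$ need not have increased at all) showing why the construction is sharp.

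I would organise the write-up as: (1) fix the construction of $F,f$ and record continuity and the piecewise-linear description; (2) verify~(i) by the \ACG{} criterion; (3) prove~(iii) by the Baire-category/disjointness argument modelled on Theorem~\ref{M.1}, invoking Lemma~\ref{C}\ref{C.c} for disjointness room and Lemma~\ref{C}\ref{C.a} for the final contradiction; (4) prove~(ii) by checking the control-function limit, splitting into $x\notin C$ (trivial) and $x\in C$ (the geometric estimate above, using $\beta>2$ and Lemma~\ref{C}\ref{C.b}). The one bookkeeping subtlety worth stating carefully is the exact placement and width of the bump inside the left half of $J$: it should be supported on, say, $[a+\tfrac14(b-a),\,a+\tfrac12(b-a)]$ rather than all of $[a,\,a+\tfrac12(b-a)]$, so that when $x$ sits just left of $a$ the interval $[x,x+\beta h]$ with $\beta<2+\epsilon$ still genuinely clears the bump and reaches past $b$ — this buffer is what converts the strict inequality $\beta>2$ into a usable inequality and is the place a reader should check the constants.
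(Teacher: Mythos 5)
Your overall architecture is the same as the paper's: bumps of height $Q_J$ (from Lemma~\ref{C}) placed on the complementary intervals $J\in\J$ of the ternary Cantor set, $F\equiv0$ on $C$, the function $x\mapsto x+\psi(x)$ as the candidate $\beta$-control, and a Baire-category argument plus the divergence in Lemma~\ref{C}\ref{C.a} for non-integrability. The genuine gap is that your construction does not depend on $\alpha$: you put the bump of $J=(a,b)$ at (or just to the left of) the midpoint $a+\tfrac12(b-a)$, which hard-wires the critical exponent $2$, so at best this can prove the case $\alpha=2$. For a prescribed $\alpha>2$ the bump must sit at the relative position $1/\alpha$: the paper takes $u_J=a+\sigma(b-a)$ with $\sigma=1/\alpha$ and gives the bump half-width $\sigma_J(b-a)$ with $\sigma_J=3^{-k}\sigma$ \emph{shrinking} relative to $|J|$. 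Then for each fixed $\beta>\alpha$ one has $\beta(\sigma-\sigma_J)\ge 1+\eta$ for all but finitely many $J$, so every $y$ in the support of the bump, seen from $x\in C$ with $x\le a$, satisfies $x+\beta(y-x)\ge b+\eta(b-a)$ and Lemma~\ref{C}\ref{C.b} controls $Q_J$; while for non-integrability one takes $x_i$ just left of $a_i$ and $y_i=x_i+\sigma(b_i-x_i)$, which still lies in the flat top of the bump and satisfies $x_i+\alpha(y_i-x_i)=b_i$ \emph{exactly}, so the increments $\gamma(b_i)-\gamma(x_i)\ge Q_{J_i}$ are taken over the mutually disjoint intervals $(x_i,b_i)$ and Lemma~\ref{C}\ref{C.a} gives the contradiction. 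With your midpoint placement this sharp dichotomy at the given $\alpha$ is unobtainable; indeed, whatever thresholds your geometry produces are determined by the fixed relative positions of the bump edges, not by $\alpha$.

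Even in the case $\alpha=2$ your fixed-proportion support breaks both directions. Take your refined choice, support $[a+\tfrac14(b-a),\,a+\tfrac12(b-a)]$. For $\beta>2$ close to $2$ and $x$ at (or just left of) $a$, a point $y$ in the left part of the bump where $F(y)$ is comparable to $Q_J$ has $x+\beta(y-x)$ still inside $(a,b)$, where $\psi$ is constant; the linear part of the control is useless because $Q_J/(b-a)=(3/2)^k2^{-2l}\to\infty$ along the generations, so the required estimate $|F(y)|\le\varepsilon(\varphi(x+\beta(y-x))-\varphi(x))$ fails and (ii) is lost. In the other direction, with $x_i$ just left of $a_i$ the point $y_i=x_i+\tfrac12(b_i-x_i)$ tends to the midpoint of $J_i$, i.e.\ to the right end of your support where $F\approx0$, so the key lower bound $\gamma(b_i)-\gamma(x_i)\ge Q_{J_i}$ in (iii) is also lost. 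Both defects disappear only when the bump is centred at the critical relative position $1/\alpha$ with width a vanishing fraction of $|J|$; the shrinking width is not a constant to be adjusted but an essential feature of the construction. (Your part (i) and the Baire-category skeleton of (iii) are fine and coincide with the paper's.)
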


\begin{proof}
Without loss of generality we assume that $I$ is an open interval containing $[0,1]$. 
Let $Q_J$ be as in Lemma~\ref{C}.
Choose a continuously differentiable function $\xi:\R\to[0,1]$ 
with support in $[-1,1]$ such that $\xi(x)=1$ for $-1/2\le x\le1/2$.
Let $\sigma:=1/\alpha$ and $\sigma_J:= 3^{-k}\sigma$ for $J\in\J_k$.
For $J=(a,b)\in\J$ let $u_J:= a+\sigma (b-a)$ and define
\[
\xi_J(x):= Q_J \xi((x-u_J)/(\sigma_J(b-a)))\text{ and  }
F(x):=\sum_{J\in\J} \xi_J(x).
\]
Notice that the support of $\xi_J$ is contained in 
$[u_J-\sigma_J (b-a),u_J+\sigma_J (b-a)]$ which, since $\sigma_J<\sigma\le1/2$
is contained in $J$. 
Since the intervals from~$\J$ are mutually disjoint,
the functions
$F_k(x):=\sum_{l=1}^k\sum_{J\in\J_l} \xi_J(x)$ 
satisfy $|F-F_k|\le \max_{l>k}\max\{Q_J: J\in J_k\}$. Hence
Lemma~\ref{C}\ref{C.c} implies that 
$F_k$ converge to $F$ uniformly,
and since $F_k$ are continuous, so is $F$. Moreover,
$F=0$ outside $[0,1]$ and $F=\xi_J$ on $J\in\J$; hence
$F$ is continuously differentiable on $\R\setminus C$ and we may define
\[ f(x):= \begin{cases} F'(x) & \text{ when $x\in\R\setminus C$}\\
                                    0 & \text{ when $x\in C$.}
               \end{cases}
\]

The first statement, that $F$ is the Denjoy-Khintchine indefinite integral of $f$
is straightforward. Since $F=0$ on $C$, it is absolutely continuous on $C$ and since it is
continuously differentiable outside $C$, it is \ACG\ also on $\R\setminus C$.
So $F$ is \ACG\ on $\R$, continuous and $F'=f$
almost everywhere, which implies that indeed 
$F$ is the Denjoy-Khintchine indefinite integral of $f$.

To prove the second statement, we let $\varphi(x):=x+\psi(x)$,
observe that $\varphi$ is strictly increasing and hence it suffices to show that
for every $\beta>\alpha$, $x\in\R$ and $\varepsilon>0$ there is $\delta>0$ such that 
\begin{equation}\label{Eb}
|F(y)-F(x)-f(x)(y-x)|\le \varepsilon |\varphi(x+\beta(y-x))-\varphi(x)|
\end{equation}
whenever $y\in(x-\delta,x+\delta)$. So we
fix $\beta>\alpha$, $x\in\R$ and $\varepsilon>0$, and 
find such a $\delta$.
This is easy when $x\notin C$, since then $f(x)=F'(x)$ and so for small enough $\delta>0$,
\[|F(y)-F(x)-f(x)(y-x)|\le\varepsilon |y-x|\le \varepsilon|\varphi(x+\beta(y-x))-\varphi(x)|\]
for every $y\in(x-\delta,x+\delta)$. 

So assume $x\in C$.
Choose $\eta>0$ such that $0<\eta<\varepsilon$ and,
recalling that $\sigma=1/\alpha$ and $\beta>\alpha$, that also
$\beta\sigma>(1+\eta)$.
By Lemma~\ref{C}\ref{C.b} we find
$0<\kappa<\sigma-(1+\eta)/\beta$  such that for every $J=(a,b)\in\J$ with $b-a<\kappa$,
\begin{equation}\label{EQ1}
Q_J \le \eta\min\bigl(\psi(b+\eta(b-a))-\psi(b),\psi(a)-\psi(a-\eta(b-a))\bigr).
\end{equation}
Since $\sigma_J\le b-a \le\kappa$, we also have
\begin{equation}\label{EQ2}
\beta(\sigma-\sigma_J)\ge 1+\eta.
\end{equation}

Let $\delta:=\kappa\sigma/2$ and
consider any $y\in(x-\delta,x+\delta)$. 
Since $F\ge 0$ and $f(x)=F(x)=0$, \eqref{Eb} reduces to showing that
\begin{equation}\label{Ed}
F(y)\le \varepsilon |\varphi(x+\beta(y-x))-\varphi(x)|.
\end{equation}
As this is obvious when $F(y)=0$, we assume that $F(y)\ne 0$.
Then there is $J=(a,b)\in\J$
such that $0< F(y)\le Q_J$
and $y\in[a+(\sigma-\sigma_J)(b-a),a+(\sigma+\sigma_J)(b-a)]$.
In particular we have $|x-y|\ge\sigma(b-a)/2$ since $x\notin J$, $\sigma-\sigma_J\ge\sigma/2$ 
and $1-(\sigma+\sigma_J)\ge 1/2-\sigma_J\ge\sigma-\sigma_J\ge\sigma/2$.
Hence $\kappa\sigma/2=\delta > |y-x| \ge \sigma (b-a)/2$, which
gives $b-a<\kappa$ and hence \eqref{EQ1} 
and \eqref{EQ2} hold.

When $\alpha>2$ the situations when $y>x$ and $y<x$ are not completely symmetric,
so we continue by distinguishing these two cases.

\begin{cs}{$y>x$}
Then $x\le a<y$ and~\eqref{EQ2} gives
\begin{multline*}
x+\beta(y-x) 
= x+ \beta(a-x)+ \beta(y-a)\\
\ge x + (a-x)+\beta(y-a)
\ge a +\beta(\sigma-\sigma_J) (b-a) \ge b+\eta(b-a).
\end{multline*}
Hence 
\begin{multline*}
F(y)\le Q_J \le \eta (\psi(b+\eta(b-a))-\psi(b))\\
\le \eta( \psi(x+\beta(y-x)) -\psi(x))
\le \varepsilon( \varphi(x+\beta(y-x)) -\varphi(x)).
\end{multline*}
\end{cs}

\begin{cs}{$y<x$} 
Then $x\ge b$ and, using $\sigma\le1/2$ and~\eqref{EQ2}
to infer \[\beta(1-\sigma-\sigma_J) \ge \beta(\sigma-\sigma_J)\ge\eta,\] 
we get
\begin{multline*}
x+\beta(y-x) 
= x - \beta(x-b)- \beta(b-y)\\
\le x - (x-b)-\beta(b-y)
\le b - \beta(1-\sigma-\sigma_J) (b-a) 
\le a-\eta(b-a).
\end{multline*}
Hence 
\begin{multline*}
F(y)\le Q_J \le \eta (\psi(a) - \psi(a-\eta(b-a)))\\
\le \eta( \psi(x) - \psi(x+\beta(y-x)))
\le \varepsilon|\varphi(x+\beta(y-x)) -\varphi(x)|. 
\end{multline*}
\end{cs}

In both cases we have proved that \eqref{Ed} holds for $y\in(x-\delta,x+\delta)$ as required, and we
conclude that $f$ is \MC{\beta} integrable. 

It remains to show that $f$ is not \MC{\alpha} integrable on $(0,1)$. Arguing by
contradiction and using Proposition~\ref{T}, we 
assume that $F$ is an indefinite \MC{\alpha} integral of $f$ on $(0,1)$.
Then there is a strictly increasing function $\gamma:(0,1)\to\R$ such that for every $x\in(0,1)$
there is $\delta_x>0$ such that for every $y\in (x,x+\delta_x)$,
\[|F(y)-F(x)-f(x)(y-x)|\le \gamma(x+\alpha(y-x))-\gamma(x).\]

Let $\Delta_k:=\{x\in C\cap(0,1): \delta_x>1/k\}$.
Since $C=\bigcup_{k=1}^\infty \Delta_k$, the Baire Category Theorem
implies that there are $k\in\N$ and an open interval $J\subset(0,1)$ 
such that $J\cap C\ne\emptyset$ and
$\Delta_k\cap J$ is dense in $C\cap J$. 
We diminish $J$ if necessary to achieve $|J|<1/k$ and
choose an interval $[a,b]\subset J$ such that $(a,b)\cap C\ne\emptyset$.

By Lemma~\ref{C}\ref{C.a} find $n\in\N$ and
intervals $J_i=(a_i,b_i)\in\J$, $i=1,\dots n$ such that $(a_i,b_i)\subset (a,b)$ and
$\sum_{i=1}^n Q_{J_i} > \gamma(b)-\gamma(a)$. 
Using that $a_i+\sigma(b_i-a_i)=u_{J_i}$, $C$ has no isolated points and $a_i\in C\cap J$, 
we find  $x_i\in C\cap J\cap\Delta_k$ so 
close to $a_i$ that the intervals $(x_i,b_i)$, 
$i=1,\dots,n$ are mutually disjoint and 
\[y_i:=x_i+\sigma(b_i-x_i)\in [u_{J_i}-\sigma_{J_i}(b_i-a_i)/2,u_{J_i}+\sigma_{J_i}(b_i-a_i)/2].\]
Hence $F(y_i)=Q_{J_i}$ and since $F(x_i)=f(x_i)=0$, 
\[Q_{J_i} = F(y_i)-F(x_i)-f(x_i)(y_i-x_i)
\le \gamma(x_i+\alpha(y_i-x_i))-\gamma(x_i) = \gamma(b_i)-\gamma(x_i),\]
Finally, we use that $\gamma$ is increasing to get 
\[\gamma(b)-\gamma(a)\ge \sum_{i=1}^n (\gamma(b_i)-\gamma(x_i)) \ge \sum_{i=1}^n Q_{J_i}
>\gamma(b)-\gamma(a),\] 
which is the desired contradiction.
\end{proof}

\subsection{Relation between \MC{\alpha} and Denjoy-Khintchine integrals}

Since indefinite \MC{\alpha} integrals are differentiable almost everywhere by 
Proposition~\ref{T}\ref{T.2} but indefinite Denjoy-Khintchine integrals need 
not be differentiable almost everywhere (as an example one may take a 
continuous function that is everywhere 
approximately differentiable but not differentiable almost everywhere), we have

\begin{obs}
On any interval $I$ there is a Denjoy-Khintchine integrable 
functions that is not \MC{\alpha} integrable 
for any $\alpha>0$.
\end{obs}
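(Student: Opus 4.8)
The plan is to exhibit a single function that is Denjoy-Khintchine integrable on $I$ yet whose Denjoy-Khintchine primitive is too badly non-differentiable for the function to be \MC{\alpha} integrable for any $\alpha>0$; such non-differentiability is permitted because \MC{\alpha} primitives are differentiable almost everywhere by Proposition~\ref{T}\ref{T.3}, whereas Denjoy-Khintchine primitives need only be approximately differentiable almost everywhere. Concretely, by a classical construction there is a continuous, \ACG\ function $F:I\to\R$ that is approximately differentiable at every point of $I$ but is (ordinarily) differentiable only on a set of Lebesgue measure zero; see \cite{Saks}. (When $I$ is unbounded one builds such an $F$ on a bounded subinterval and extends it by constants, using that \MC{\alpha} integrability on $I$ passes to subintervals.) Set $f:=F'_{\mathrm{ap}}$. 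Then $F$ is \ACG, continuous and $F'_{\mathrm{ap}}=f$ almost everywhere, so by the descriptive characterization of the Denjoy-Khintchine integral recalled above, $F$ is an indefinite Denjoy-Khintchine integral of $f$ on $I$; in particular $f$ is Denjoy-Khintchine integrable. The claim to prove is that $f$ is not \MC{\alpha} integrable on $I$ for any $\alpha>0$.

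Suppose, for contradiction, that $f$ is \MC{\alpha} integrable on $I$ for some $\alpha>0$, with indefinite \MC{\alpha} integral $G$. Applying Proposition~\ref{dense} to the pair $(G,f)$ yields a dense open set $U\subset I$ on each component of which $G$ is an indefinite Lebesgue integral of $f$. Fix a component $(c,d)$ of $U$. Then $G$ is absolutely continuous on $(c,d)$ with $G'=f$ almost everywhere there, hence $G|_{(c,d)}$ is \ACG\ on $(c,d)$ with $G'_{\mathrm{ap}}=f$ almost everywhere, i.e.\ $G|_{(c,d)}$ is an indefinite Denjoy-Khintchine integral of $f$ on $(c,d)$. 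On the other hand $F|_{(c,d)}$ is continuous and \ACG\ on $(c,d)$ with $F'_{\mathrm{ap}}=f$ almost everywhere on $(c,d)$, so $F|_{(c,d)}$ is an indefinite Denjoy-Khintchine integral of $f$ on $(c,d)$ as well. Since two indefinite Denjoy-Khintchine integrals of the same function on an interval differ by an additive constant \cite{Saks}, $F|_{(c,d)}=G|_{(c,d)}+c$ for some constant $c$; in particular $F$ is absolutely continuous, and therefore differentiable almost everywhere, on $(c,d)$.

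Letting $(c,d)$ range over the components of $U$, we see that $F$ is differentiable almost everywhere on $U$. But $U$ is a nonempty open subset of $I$, hence of positive Lebesgue measure, so $F$ is differentiable on a set of positive measure --- contradicting the choice of $F$ as being differentiable only on a null set. Hence $f$ is not \MC{\alpha} integrable on $I$ for any $\alpha>0$, while $f$ is Denjoy-Khintchine integrable on $I$, which is what we wanted.

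Apart from Propositions~\ref{T} and~\ref{dense} and routine properties of absolute continuity, the argument relies on two classical facts, both available in \cite{Saks}: the existence of a continuous, everywhere approximately differentiable \ACG\ function that is differentiable only on a null set, and the uniqueness up to an additive constant of the indefinite Denjoy-Khintchine integral. Given Proposition~\ref{dense} the deduction itself is short, so the real point --- and the only place any ingenuity is needed --- is the construction of that pathological $F$: a continuous function whose oscillations occur only on sets of density zero at each point, so that an approximate derivative exists everywhere, yet are violent enough to prevent ordinary differentiability on all but a null set.
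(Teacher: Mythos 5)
Your argument is correct and rests on the same idea as the paper's one-sentence justification of this Observation: take a continuous, everywhere approximately differentiable $F$ that is differentiable only on a Lebesgue null set, and play the a.e.\ differentiability of indefinite \MC{\alpha} integrals (Proposition~\ref{T}) against the failure of a.e.\ differentiability of $F$. Your additional step --- using Proposition~\ref{dense} together with the uniqueness (up to a constant) of the Denjoy-Khintchine primitive to identify a hypothetical \MC{\alpha} primitive of $f=F'_{\mathrm{ap}}$ with $F$ on each component of a dense open set --- correctly supplies the detail the paper leaves implicit, namely that one must exclude $f$ having \emph{some} \MC{\alpha} primitive, not merely that $F$ itself cannot be one.
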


For the opposite direction we need more work. 
To construct the required example,  we will use the Cantor-type set $C$ with base $q=5$, 
its approximating sets $C_k$, the sets of intervals $\I_k$, $\J_k$ and $\J$, 
and the corresponding Cantor-type function $\psi$
described in Section~\ref{prlm}.

\begin{thm}\label{M.4}
There is a function $f:\R\to\R$ that is \MC{\alpha} integrable on $\R$ for every $\alpha>2$
but is not Denjoy–Khintchine integrable on $[0,1]$. 
\end{thm}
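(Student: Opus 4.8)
The plan is to follow the scheme of the proof of Theorem~\ref{M.3}, this time over the base-$5$ Cantor set $C$ with the sets $\I_k,\J_k,\J$ and the function $\psi$ of Section~\ref{prlm} taken for $q=5$. I will build a continuous $F:\R\to\R$ that is constant outside $[0,1]$, is $C^1$ on $\R\setminus C$, and set $f:=F'$ on $\R\setminus C$ and $f:=0$ on $C$; the control function witnessing \MC{\beta} integrability for $\beta>2$ will be $\varphi:=\psi+\mathrm{id}$. Unlike the bumps used in Theorem~\ref{M.3}, which return to $0$ at both ends of every gap, the present $F$ must have a genuine net increment across infinitely many gaps, and an analogue of Lemma~\ref{C} will be needed to choose these increments; the crucial numerology is that of a sequence $\varepsilon_k>0$ with
\[ \sum_k\varepsilon_k<\infty,\qquad \varepsilon_k=o(3^{-k}),\qquad \sum_k3^k\varepsilon_k=\infty, \]
for instance $\varepsilon_k:=3^{-k}/k$.

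The failure of Denjoy--Khintchine integrability rests on a clean reduction. If $G$ is an indefinite Denjoy--Khintchine integral of $f$ on $[0,1]$, then on each component $J=(a_J,b_J)$ of $[0,1]\setminus C$ both $G$ and $F$ are indefinite Denjoy--Khintchine integrals of $f$ — the latter because $F$ is $C^1$ there — so $G-F$ is constant on $\overline J$; in particular $G(b_J)-G(a_J)=F(b_J)-F(a_J)$ for every $J$. Since $G$ is \ACG\ on $[0,1]$, writing $C$ as a countable union of sets on which $G$ has bounded variation and applying the Baire Category Theorem inside $C$ produces an interval $I$ meeting $C$ on which $G$ has bounded variation on $C\cap I$; but the non-overlapping intervals $\overline J$, $J\in\J$, $\overline J\subset I$, have end-points in $C\cap I$, so $\sum_{\overline J\subset I}|F(b_J)-F(a_J)|=\sum_{\overline J\subset I}|G(b_J)-G(a_J)|$ is bounded by the (finite) variation of $G$ on $C\cap I$. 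Hence it suffices that the net increment of $F$ across each level-$k$ gap have absolute value $\asymp\varepsilon_k$: since a level-$p$ interval contains $\asymp3^{k-p}$ gaps of level $k$, the displayed sum is then $\asymp\sum_{k>p}3^k\varepsilon_k=\infty$ whenever $I$ meets $C$, a contradiction.

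For \MC{\beta} integrability with $\beta>2$ one repeats the ``overshoot'' argument from the proof of Theorem~\ref{M.3}. Given $x\in C$ and $y$ near $x$ — so $f(x)=0$ and we must bound $|F(y)-F(x)|$ — the point $y$ lies in some gap $J$ of level $k$ in whose interior all the variation of $F$ is confined to a window of relative length $\sigma_J\to0$ about the midpoint $u_J$; consequently, for $y>x$,
\[ x+\beta(y-x)\ge b_J+\bigl((\tfrac\beta2-1)-o(1)\bigr)(b_J-a_J), \]
which lands inside a Cantor interval to the right of $b_J$ across which $\psi$ has already risen by at least $3^{-k-j}$ for some $j=j(\beta)$ independent of $k$, while $|F(y)-F(x)|$ is of order $\varepsilon_k$ plus the variation of $F|_C$ between $x$ and the nearer end of $J$. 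Since $\varepsilon_k=o(3^{-k})$, the first term is $o(3^{-k-j})$ once $y$ is close enough to $x$ (i.e. $k$ large); the case $y<x$ is symmetric because $u_J$ is the midpoint of $J$.

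The genuine difficulty — and the reason base $5$ is used in place of base $3$ — is to make these two requirements compatible. The Denjoy--Khintchine obstruction forces $\sum_{\overline J\subset I}|F(b_J)-F(a_J)|=\infty$ on every portion of $C$, so these increments cannot be dominated uniformly by any increasing control function, yet the \MC{\beta} estimate needs the variation of $F|_C$ near each gap end-point to be overwhelmed by the increase of $\psi$. The resolution is to locate the oscillation of $g:=F|_C$ strictly inside the central surviving fifths: at every level $k$ one raises $F$ by $\varepsilon_k$ on the middle fifth of each level-$(k-1)$ interval and leaves it unchanged on the two outer fifths, so that the two level-$k$ gaps flanking that middle fifth carry increments $+\varepsilon_k$ and $-\varepsilon_k$ (whence $\sum_k\varepsilon_k<\infty$ makes $F$ continuous, while the increments are non-summable on every portion, as needed above). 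The point of base $5$ is that this middle fifth — where $g$ oscillates with amplitude $\varepsilon_k$ — is separated from both outer fifths, hence from every gap end-point of that level, by the full $\psi$-mass $3^{-k-1}$ of a whole surviving fifth; together with $\varepsilon_{k+1}\ll3^{-k-1}$, a recursion on the level then shows $|F(y)-F(x)|\ll\psi(x+\beta(y-x))-\psi(x)$ as $y\to x$, even when $y$ sits in the flat part of a gap close to its end-point, where the overshoot past the gap is too small to help. Base $3$ has no such central surviving piece, which is exactly the extra room alluded to in Section~\ref{prlm}. Making this recursion uniform, and fixing the profile of $F$ inside each gap — flat near the ends, a single monotone rise of height $\varepsilon_k$ across the narrow central window — so that all of the above hold simultaneously for every $\beta>2$, is the technical core of the argument.
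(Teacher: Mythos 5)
Your construction coincides with the paper's: the base-$5$ set of Section~\ref{prlm}, $F$ raised by $\varepsilon_k\asymp 3^{-k}/k$ on the middle fifth of each $\I_{k-1}$-interval with the rise and fall confined to shrinking central windows of the two flanking level-$k$ gaps, $f:=F'$ off $C$ and $f:=0$ on $C$, and $\psi+\mathrm{id}$ as control. Your Denjoy--Khintchine half is also essentially the paper's argument and is sound: $G-F$ is constant on each closed gap, Baire inside $C$ gives a portion on which $G$ is \VB\ (via \AC) on $C$, and the gap count $\asymp 3^{k-p}$ against $\sum_k 3^k\varepsilon_k=\infty$ gives the contradiction.

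The genuine gap is the verification of \MC{\alpha} integrability for $\alpha>2$, which you explicitly defer (``making this recursion uniform \dots is the technical core''), and the mechanism you offer for it is wrong as stated: the middle fifth is \emph{not} separated from the level-$k$ gap end-points by the $\psi$-mass of a surviving fifth --- it abuts those gaps, and gaps carry no $\psi$-mass at all. What base $5$ actually buys is two gaps per parent, so the level-$k$ bump can return to zero inside its own parent; that is what keeps the oscillation of $F$ on every $\I_{k-1}$-interval of order $\varepsilon_k$ and the increment across every level-$k$ gap equal to $\pm\varepsilon_k$. The estimate itself requires the case analysis the paper carries out: for $x\in C$, $y>x$, let $k$ be least with a gap of $\J_k$ inside $(x,y)$; (i) if $y\in C_k$ then $x,y$ lie in one $\I_{k-1}$-interval, $|F(y)-F(x)|\lesssim\varepsilon_k$, and $\alpha>2$ makes $x+\alpha(y-x)$ clear that gap by a full $5^{-k}$, so $\psi$ gains $3^{-k}\gg\varepsilon_k$; (ii) if $y\notin C_k$ it lies in a gap $(w,z)$ of some level $j\le k$ (possibly $j\ll k$), and if $y$ is in the initial flat $\tau_j$-portion then $F(y)=F(w)$ and one reduces to (i) at $w$; (iii) if $y$ is past the flat portion then $y-x\ge\tau_j5^{-j}$, so $x+\alpha(y-x)\ge z+5^{-j-l}$ with $l=l(\alpha)$ fixed (this is exactly where $\alpha\tau_j>1+5^{-l}$, i.e.\ $\alpha>2$ and $\tau_j\to\tfrac12$, enters), and $\psi$ gains $3^{-j-l}\gg\varepsilon_j$. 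Your sketch conflates (ii)--(iii) (``$y$ lies in some gap $J$ of level $k$''), does not address that the gap containing $y$ may be of much lower level than $k$, and supplies none of the quantifier bookkeeping (choice of $l$ and of the threshold level, hence of $\delta$, from $\alpha$ and $\varepsilon$) needed to make the bound uniform; until that is done, the \MC{\alpha}-integrability of $f$ for every $\alpha>2$ --- the heart of the theorem --- remains unproved. (Minor: the $\psi$-mass of a surviving fifth at level $k$ is $3^{-k}$, not $3^{-k-1}$.)
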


\begin{proof}
For any interval $I=[a,b]$ and $0<\tau< 1/2$ choose a continuously differentiable function
$g_{I,\tau}:\R\to\R$ which is increasing on $(-\infty,(a+b)/2]$, 
decreasing on $[(a+b)/2,\infty)$ and satisfies
\[g_{I,\tau}(x) =
\begin{cases}
0 & x\le a+(1+\tau)(b-a)/5;\\
0 & x \ge b-(1+\tau)(b-a)/5;\\
1 & x\in [a+(2-\tau)(b-a)/5,b-(2-\tau)(b-a)/5].
\end{cases}\]

We let $\sigma_k:= 1/(k+1)$, $\tau_k:= (k+1)/(2(k+2))$ and
\[F(x):=\sum_{k=1}^\infty \sigma_k 3^{-k}\sum_{I\in \I_{k-1}}  g_{I,\tau_k}(x).\]
Since for each $k$, $\sum_{I\in \I_k}  g_{I,\tau_k}(x)$ is continuous and bounded by one,
$F$ is the sum of a uniformly convergent series of continuous functions,
and so it is continuous. We list the following easy but crucial 
properties of $F$.

\begin{enumerate}[label=(\alph*)]
\item\label{F.2}
$|F(y)-F(x)| \le \sigma_k 3^{-k+1}$ whenever $x,y$ belong to the same interval from $\I_{k-1}$;
\item\label{F.4}
$|F(b)-F(a)|=\sigma_k 3^{-k}$ whenever $(a,b)$ is an interval from $\J_k$;
\item\label{F.5}
if $(a,b)\in\J_k$, then $F$ is constant
on $[a,a+\tau_k(b-a)]$ as well as on $[b-\tau_k(b-a),b]$.
\item\label{F.3}
On each $J\in\J_{k}$ the sum defining $F$ is finite,
and hence $F$ is continuously differentiable on $\R\setminus C$.
\end{enumerate}

The property \ref{F.3} allows us to define
\[f(x):=
\begin{cases} F'(x) &x\notin C;\\
0&x\in C.
\end{cases}
\] 

Fix for a while $\alpha>2$, $x\in C$ and $\varepsilon>0$. 
We show that there is $\delta>0$ such that 
\begin{equation}\label{PE.3}
|F(y)-F(x)|\le \varepsilon(\psi(x+\alpha(y-x))-\psi(x)) 
\end{equation}
whenever $0<y-x<\delta$. 
Since $F=0$ on $[1,\infty)$, any $\delta>0$ will do for $x=1$.
So we assume $x<1$. To define $\delta$, start by 
using that 
$\alpha>2$ 
to find $l\in\N$ such that
$\alpha > 2(1 + 5^{-l+1})$. 
Since $\tau_j\to 1/2$ and $\sigma_j\to 0$, there is $m\in\N$ such that
$\alpha \tau_j> 1 + 5^{-l}$ and $\sigma_{j}3^{l+2}<\varepsilon$ for $j\ge m$.
Having done this, we let
$\delta=\min(1-x, 5^{-m-1})$.

We are now ready to prove that \eqref{PE.3} indeed holds for $y\in (x,x+\delta)$.
Given such $y$, find the least $k\ge 1$ for which there is an interval
$(u,v)\in\J_k$ that is contained in $(x,y)$. Clearly, $k>m$.
To finish the argument, we distinguish three cases.

\begin{cs}{$y\in C_{k}$}
Then $x,y$ belong to the same interval from $\I_{k-1}$ since
otherwise $k>1$ and $[x,y]$ contains an interval from $\J_{k-1}$.
Moreover,
$x+\alpha(y-x) = x+\alpha(u-x)+\alpha(y-u) \ge u +\alpha(v-u)$. Since $\alpha>2$
and $v-u=5^{-k}$, we infer that
$[x+\alpha(y-x),x]\supset [u+\alpha(v-u),u]\supset[v,v+5^{-k}]$.
Since $[v,v+5^{-k}]$ belongs to~$\I_k$, we conclude from \ref{F.2} that
\[|F(y)-F(x)|\le \sigma_k 3^{-k+1}= 3\sigma_k (\psi(v+5^{-k})-\psi(v))
\le\tfrac12\varepsilon(\psi(x+\alpha(y-x))-\psi(x)).\]
\end{cs}

In the remaining cases $y\notin C_k$. 
Since $y\in(0,1)$, there is $1\le j\le k$ such that $y\in(w,z)$ for some $(w,z)\in\J_j$.
Since $(x,y)$ contains an interval from $\J_k$ but does not contain any interval
from any $\J_i$ with $i<k$, we have $w\in C_k$,
$x<w$ and $(x,w)$ contains an interval from~$\J_k$. By the previous case,
\begin{equation}\label{pc}
|F(w)-F(x)|\le \tfrac12\varepsilon(\psi(x+\alpha(w-x))-\psi(x)).
\end{equation}

\begin{cs}{$y\notin C_k$ and $y-w\le \tau_j 5^{-j}$} 
Then \ref{F.5} implies that $F$ is constant on $[w,y]$ and so
\eqref{pc} gives
\[|F(y)-F(x)|=|F(w)-F(x)|\le\tfrac12\varepsilon(\psi(x+\alpha(y-x))-\psi(x)).\]
\end{cs}

\begin{cs}{$y\notin C_k$ and $y-w >\tau_j 5^{-j}$} 
We use 
$\delta>y-x\ge y-w\ge \tau_j 5^{-j}\ge 5^{j-1}$ to infer that $j> m$. Hence
$[x,x+\alpha(y-x)]\supset [w,w+\alpha(y-w)]
\supset [z,z+5^{-j-l}]$. Since $[z,z+5^{-j-l}]\in\I_{j+l}$
and $[w,y]$ is contained in an interval from $\I_{j-1}$, \ref{F.2} implies 
\[|F(y)-F(w)|\le\sigma_{j} 3^{-j+1}
=\sigma_j 3^{l+1}(\psi(z+5^{-j})-\psi(z))
\le \tfrac12\varepsilon(\psi(x+\alpha(y-x))-\psi(x)).\]
This and \eqref{pc} give
\[|F(y)-F(x)|\le |F(y)-F(w)|+|F(w)-F(x)|
\le \varepsilon(\psi(x+\alpha(y-x))-\psi(x)).\]
\end{cs}

Having thus verified \eqref{PE.3}, we use it and 
$f(x)=0$ (since $x\in C$) to conclude that
\[\lim_{y \searrow x}{\frac{F(y)-F(x)-f(x)(y-x)}{\varphi(y)-\varphi(x)}}=0.\]
A symmetric argument shows
\[\lim_{y \nearrow x}{\frac{F(y)-F(x)-f(x)(y-x)}{\varphi(y)-\varphi(x)}}=0.\]
Finally, if $x\notin C$, we infer from $|\varphi(y)-\varphi(x)|\ge|y-x|$ and $F'(x)=f(x)$ that
\[\limsup_{y \to x}{\frac{|F(y)-F(x)-f(x)(y-x)|}{|\varphi(y)-\varphi(x)|}}
\le \lim_{y \to x}{\frac{|F(y)-F(x)-f(x)(y-x)|}{|y-x|}}
=0.\]
Hence $F$ is an indefinite \MC{\alpha} integral of $f$, as wanted.

It remains to show that $f$ is not Denjoy-Khintchine integrable. 
Suppose the opposite and let $H$ be its indefinite Denjoy-Khintchine integral.
Notice that, in principle, $H$ may be different 
from $F$. However, on each interval $(a,b)$ from $\J$, 
both $H$ and $F$ are Lebesgue indefinite integrals of $f$
and hence $H-F$ is constant on $[a,b]$. It follows that
$H(b)-H(a)=F(b)-F(a)$, and so \ref{F.4} holds with $F$ replaced by $H$.
This information will suffice for our arguments.

Since $H$ is continuous, $C$ is a union of closed sets on which
it is \AC. By the Baire Category Theorem, there is 
an open interval $I$ meeting $C$ such that $H$  is \AC\ on $I\cap C$. 
Find a component $[u,v]$ of some $C_m$ contained in $I$.
For $k\ge m$ the set $[u,v]\cap C_k$ has $3^{k-m}$ components and so
$[u,v]\cap (C_{k+1}\setminus C_k)$ has $2\cdot 3^{m-k}$ components.
For each such component, say $(u,v)$, we have 
$|H(v)-H(u)|=\sigma_{k+1} 3^{k+1}$ by validity of \ref{F.4} for $H$.

Let $(a_j,b_j)$ be an enumeration of the components of $[u,v]\setminus C$.
Summing first over those $(a_j,b_j)$ that are components of $C_{k+1}\setminus C_k$
and then over $k>m$, we get
$\sum_{j=1}^\infty |H(b_j)-H(a_j)|= \sum_{k=m}^\infty 2\sigma_{k+1} 3^{k+1} 3^{m-k}
=\sum_{k=m}^\infty 2\sigma_{k+1} 3^{m+1}=\infty$. 
This shows that $H$ is not \AC\ on $I\cap C$, and this contradiction
shows that $f$ is not Denjoy-Khintchine integrable.
\end{proof}

\end{document}